\newtheorem{theorem}{Theorem}[section]
\newtheorem{proposition}[theorem]{Proposition}
\newtheorem{lemma}[theorem]{Lemma}
\newtheorem{corollary}[theorem]{Corollary}
\newtheorem{definition}[theorem]{Definition}
\newtheorem{notation}[theorem]{Notation}
\newtheorem{example}[theorem]{Example}
\newtheorem{remark}[theorem]{Remark}
\newcommand{\skipit}[1]{{}}
\newcommand{\prfend}{\hbox to7pt{\hfil}
\par\vskip-\baselineskip\hbox to\hsize
{\hfil\vbox {\hrule width6pt height6pt}}\vskip\baselineskip}
\newcommand{\ZZ}{\mathbb{Z}}
\newcommand {\PP}{\mathbb{P}}
\DeclareMathOperator{\Image}{Im}
\newcommand{\af}{\mathbb{A}}
\DeclareMathOperator{\GL}{GL}
\DeclareMathOperator{\codim}{codim}
\DeclareMathOperator{\syz}{syz}
\DeclareMathOperator{\trace}{trace}
\DeclareMathOperator{\GCD}{GCD}
\DeclareMathOperator{\diag}{diag}
\DeclareMathOperator{\HS}{HS}
\DeclareMathOperator{\HF}{HF}
\DeclareMathOperator{\HP}{HP}
\DeclareMathOperator{\Aut}{Aut}
\DeclareMathOperator{\Id}{Id}
\newcommand{\myarrow}[2]{\hbox to #1pt{\hfil$\to$\hfil}{\hskip-#1pt{\raise
10pt\hbox to#1pt{\hfil$\scriptscriptstyle #2$\hfil}}}}
\begin{document}

\title{Togliatti systems associated to the dihedral group and the weak Lefschetz property.}

\author[Liena Colarte]{Liena Colarte-G\'omez}
\address{Departament de Matem\`{a}tiques i Inform\`{a}tica, Universitat de Barcelona, Gran Via de les Corts Catalanes 585, 08007 Barcelona,
Spain}
\email{liena.colarte@ub.edu}

\author[Emilia Mezzetti]{Emilia Mezzetti}
\address{Dipartimento di Matematica e  Geoscienze, Universit\`a di
Trieste, Via Valerio 12/1, 34127 Trieste, Italy}
\email{mezzette@units.it}

\author[Rosa M. Mir\'o-Roig]{Rosa M. Mir\'o-Roig}
\address{Departament de Matem\`{a}tiques i Inform\`{a}tica, Universitat de Barcelona, Gran Via de les Corts Catalanes 585, 08007 Barcelona,
Spain}
\email{miro@ub.edu}

\author[Mart\'{\i} Salat]{Mart\'{\i} Salat-Molt\'o}
\address{Departament de Matem\`{a}tiques i Inform\`{a}tica, Barcelona Graduate School of Mathematics (BGSMath), Universitat de Barcelona, Gran Via de les Corts Catalanes 585, 08007 Barcelona,
Spain}
\email{marti.salat@ub.edu}

\begin{abstract}
In this note, we study Togliatti systems generated by invariants of the dihedral group $D_{2d}$ acting on $k[x_{0},x_{1},x_{2}]$. This leads to the first family of non monomial Togliatti systems, which we call $GT-$systems with group $D_{2d}$. We study their associated varieties $S_{D_{2d}}$, called $GT-$surfaces with group $D_{2d}$. We prove that they are arithmetically Cohen-Macaulay surfaces whose homogeneous ideal, $I(S_{D_{2d}})$, is minimally generated by quadrics and we find a  minimal free resolution of $I(S_{D_{2d}})$.
\end{abstract}

\thanks{Acknowledgments: The first and third authors are partially supported by MTM2016--78623-P. The last author is partially supported by  MDM-2014-0445-18-2. The second author
 is supported by PRIN 2017SSNZAW  and by FRA  of the University of
Trieste, and she is a member of INDAM-GNSAGA}

\maketitle

\tableofcontents

\markboth{}{}

\today

%*****************************************************************************
\large

\section{Introduction}

{\em Togliatti systems} were introduced in \cite{MM-RO}, where the authors related the existence of homogeneous artinian ideals failing the weak Lefschetz property to the existence of projective varieties satisfying at least one Laplace equation. Precisely, a Togliatti system is an artinian ideal $I_{d} \subset k[x_{0},\hdots, x_{n}]$ generated by $r \leq \binom{n+d-1}{n-1}$ forms $F_{1},\hdots, F_{r}$ of degree $d$ which fails the weak Lefschetz property in degree $d-1$. The name is in honour of E. Togliatti who gave a complete classification of rational surfaces parameterized by cubics and satisfying at least one Laplace equation of order $2$ (see \cite{T1} and \cite{T2}). Since then, this topic and related problems have been the focus of attention of many works, as one can see in \cite{AMRV}, \cite{Altafi-Boij}, \cite{CMM-R}, \cite{CMM-RS}, \cite{CM-R},
\cite{MM-R1}, \cite{MM-R2}, \cite{MkM-R}, \cite{M-RS} and \cite{Szpond}.
Notwithstanding, most expositions and results deal with {\em monomial} Togliatti systems, while the non monomial case remains barely known.

Recently, in \cite{MM-R2} and \cite{CMM-RS} the authors studied $GT-$systems, a new family of monomial Togliatti systems having a special geometric property. A $GT-$system is a  Togliatti system $I_{d}$ whose associated morphism $\varphi_{I_{d}}:\PP^{n} \to \PP^{\mu_{I_{d}}-1}$ is a Galois covering with cyclic group $\ZZ/d \ZZ$. This geometric property  establishes a new link between Togliatti systems and invariant theory. Precisely in \cite{CMM-R} and \cite{CM-R}, the authors apply invariant theory techniques to investigate both $GT-$systems and their images $X_{d} = \varphi_{I_{d}}(\PP^{n})$, the so called $GT-$varieties. These varieties are actually monomial projections of the Veronese variety $\nu_{d}(\PP^{n}) \subset \PP^{\binom{n+d}{d}-1}$ of $\PP^{n}$ from the linear space $\langle I_{d}^{-1} \rangle$ generated by the Macaulay's inverse system of $I_{d}$. Interest in these varieties relies on the following two problems. The first one is to determine whether $X_{d}$ is an arithmetically Cohen-Macaulay (shortly aCM) monomial projection  of $\nu_{d}(\PP^{n})$. This contributes to the longstanding problem, posed by Gr\"obner in \cite{Grob}, to determine when a projection of $\nu_{d}(\PP^{n})$ is aCM. The second one is the classical problem of finding a minimal free resolution of the homogeneous ideal of a projective variety. In this note, we extend the notion of $GT-$systems and $GT-$varieties, as presented in \cite{CMM-R}, to the action of any finite group, including non abelian ones. This sheds new light on the study of non monomial Togliatti systems. In this work, we focus on the dihedral group action on $k[x_0,x_1,x_2]$. We show that the invariant theory point of view used in \cite{CMM-R}, provides enough techniques to study the $GT-$system $I_{2d}$ associated to the dihedral group and to tackle the geometry of the $GT-$surfaces $S_{D_{2d}}$ defined by the $GT-$system $I_{2d}$.

More precisely, we fix integers $3 \leq d$, $0 < a < \frac{d}{2}$ with $\GCD(d,a) = 1$ and $\varepsilon$ a $2d$th primitive root of $1$. We set $e = \varepsilon^2$ and let $\rho_{a}:D_{2d}\rightarrow \GL(3,k)$ be the linear representation of $D_{2d}=\langle \tau, \eta|\tau^{d}=\eta^{2}=(\eta\tau)^{2}=1\rangle$, the dihedral group of order $2d$, defined by:
$$
\rho_{a}(\tau)=M_{d;a,d-a}=
\left(
\begin{array}{lll}
1&0&0\\
0&e^{a}&0\\
0&0&e^{d-a}
\end{array}
\right)\qquad\text{and}\qquad
\rho_{a}(\eta)=\sigma = \begin{pmatrix}
1 & 0 & 0\\
0 & 0 & 1\\
0 & 1 & 0
\end{pmatrix}.$$
Since $\GCD(a,d) = 1$, the finite cyclic group $\langle \rho_{a}(\tau) \rangle$ of order $d$ coincides with  $\langle M_{d;1,d-1} \rangle \subset \GL(3,k)$. We set $C_{2d}$ the finite cyclic group of order $2d$ generated by $\varepsilon \Id$ and let 
$\overline{D_{2d}} = D_{2d} \times C_{2d} \subset \GL(3,k)$ be the cyclic extension of $D_{2d}$. We denote by $I_{2d}$ the ideal generated by all forms of degree $2d$ which are invariants of $\overline{D_{2d}}$. Our main goal is to relate the ideal $I_{2d}$ to the ring $R^{\overline{D_{2d}}}$ of invariants  of $\overline{D_{2d}}$. Using the structure of the ring $R^{D_{2d}}$, we determine a minimal set of $\mu_{2d}$ generators of $I_{2d}$, formed by monomials and binomials, and we prove that it is a minimal set of generators of $R^{\overline{D_{2d}}}$. This allows us to establish that the ideal $I_{2d}$ is a $GT-$system with group $D_{2d}$, once proved in Lemma \ref{Lemma:TogliattiBound} that $\mu_{2d} \leq 2d+1$  (see Theorem \ref{tea}). As a consequence, we obtain that $R^{\overline{D_{2d}}}$ is the coordinate ring of the surface $S_{D_{2d}} = \varphi_{I_{2d}}(\PP^{2})$ associated to the $GT-$system $I_{2d}$. Through this connection, we prove that $S_{D_{2d}}$ is aCM and we compute a minimal free resolution of $R^{\overline{D_{2d}}}$. In particular, we show that its homogeneous ideal $I(S_{D_{2d}})$ is minimally generated by quadrics and we determine a minimal set of generators.

\iffalse
It is the direct sum of the trivial representation in $\GL(1,k)$ and a faithful representation in $\GL(2,k)$ of $D_{2d}$. Therefore, the ring $R^{D_{2d}}$ of invariants is generated by the three algebraically independent invariants $x_0$, $x_1x_2$ and $x_1^d + x_2^d$ of $D_{2d}$ (see \cite{Shephard-Todd} and \cite{Stanley}). Thus $R^{D_{2d}} = k[x_0,x_1x_2,x_1^d + x_2^d] = \oplus_{t \geq 0} R_{t} \cap R^{D_{2d}}$ is a non-standard graded polynomial ring. 
\fi 

\vspace{0.3cm}
Let us explain how this note is organized. We begin establishing in Section \ref{Sec:Preliminary}, all the preliminary results and definitions needed in the sequel. In particular, we define the extended notion of $GT-$system with respect to any finite group acting on $k[x_0,\hdots,x_n]$.
Section \ref{the algebra of invariants} is devoted to find a set of fundamental invariants of $\overline{D_{2d}}$. We prove that $R^{\overline{D_{2d}}}$ is minimally generated by monomials and homogeneous binomials of degree $2d$, which we completely determine (see Theorem \ref{Theorem:Algebrabasis}) using the structure of the ring $R^{D_{2d}}$. We compute the Hilbert function, series and polynomial of $R^{\overline{D_{2d}}}$ and we establish that $R^{\overline{D_{2d}}}$ is a level algebra with Castelnuovo-Mumford regularity three. In Section \ref{A new class of non monomial Togliatti systems.}, we introduce a new family of non monomial Togliatti systems, that we call $GT-$systems with group $D_{2d}$ and we study their associated varieties, we call them $GT-$surfaces with group $D_{2d}$. We identify the coordinate ring of any $GT-$surface with group $D_{2d}$ with the ring $R^{\overline{D_{2d}}}$. This allows us to translate geometrically the results obtained in Section \ref{the algebra of invariants}. We show that any $GT-$surface with group $D_{2d}$ is aCM. In addition, the information from the Hilbert series and the regularity allow us to compute a minimal free resolution of the homogeneous ideal $I(S_{D_{2d}})$ of any $GT-$surface $S_{D_{2d}}$ with group $D_{2d}$ (see Theorem \ref{Theorem:Resolution}). In particular, we show that $I(S_{D_{2d}})$ is minimally generated by quadrics. Right after, we focus on determining a minimal set of generators.

\vspace{0.3cm}
\noindent {\bf Notation.}
Through this note $k$ denotes an algebraically closed field of characteristic zero and $\GL(n,k)$ denotes the group of invertible $n \times n$ matrices with coefficients in $k$.

\vspace{0.3cm}
\noindent {\bf Acknowledgments.} The authors are very grateful to the anonymous referee of an earlier version of this work for providing insightful comments and directions which have substantially improved this paper.

\section{Preliminaries.}\label{Sec:Preliminary}
In this section, we collect the main concepts and tools we use in the body of this note. First, we relate the weak Lefschetz property of artinian ideals with varieties satisfying a Laplace equation and we recall the notion of Togliatti system introduced in \cite{MM-RO}. Secondly, we see that quotient varieties by finite groups are Galois coverings and we
extend the notion of $GT-$system from \cite{MM-R2} to any finite group $G$. Finally, we review some basic facts on the theory of invariants of finite groups needed in the sequel.

\vspace{0.3cm}
\noindent {\bf Weak Lefschetz property.} Set $R=k[x_{0},\hdots, x_{n}]$ the polynomial ring and let $I\subset R $ be a homogeneous artinian ideal. We say that $I$ has the \emph{weak Lefschetz property (WLP)}
if there is a linear form $L \in (R/I)_1$ such that, for all
integers $j$, the multiplication map
\[
\times L: (R/I)_{j-1} \to (R/I)_j
\]
has maximal rank, i.e.\ it is injective or surjective.  In \cite{MM-RO} Mezzetti, Mir\'{o}-Roig and Ottaviani proved
that the failure of the WLP is related to the existence of varieties satisfying at least one Laplace
equation of order greater than 2. More precisely, they proved:

\begin{theorem} \label{tea} Let $I\subset R$ be an artinian
ideal
generated
by $r$ forms $F_1,\dotsc,F_{r}$ of degree $d$ and let $I^{-1}$ be its Macaulay inverse system.
If
$r\le \binom{n+d-1}{n-1}$, then
  the following conditions are equivalent:
\begin{itemize}
\item[(i)] $I$ fails the WLP in degree $d-1$;
\item[(ii)]  $F_1,\dotsc,F_{r}$ become
$k$-linearly dependent on a general hyperplane $H$ of $\PP^n$;
\item[(iii)] the $n$-dimensional   variety
 $X=\overline{\Image (
\varphi )}$
where
$\varphi \colon\PP^n \dashrightarrow \PP^{\binom{n+d}{d}-r-1}$ is the  rational map associated to $(I^{-1})_d$,
  satisfies at least one Laplace equation of order
$d-1$.
\end{itemize}
\end{theorem}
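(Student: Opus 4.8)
The plan is to establish the two equivalences (i) $\Leftrightarrow$ (ii) and (ii) $\Leftrightarrow$ (iii) separately, using Macaulay duality (apolarity) as the dictionary throughout. The numerical hypothesis $r\le\binom{n+d-1}{n-1}$ enters at every turn: it forces all the relevant linear maps to go from a space of dimension $\le$ that of the target, so that ``maximal rank'' always means ``injective'', and it forces the expected dimension of the $(d-1)$-st osculating space of $X$ to be $\binom{n+d-1}{n}-1$, the same as for the Veronese $\nu_d(\PP^n)$.

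For (i) $\Leftrightarrow$ (ii): since $I$ is generated in degree $d$ we have $I_{d-1}=0$, hence $(R/I)_{d-1}=R_{d-1}$ has dimension $\binom{n+d-1}{n}\le\binom{n+d}{n}-r=\dim_k(R/I)_d$, and for every linear form $L$ the map $\times L\colon(R/I)_{d-1}\to(R/I)_d$ has maximal rank if and only if it is injective. Its kernel is $\{g\in R_{d-1}\mid Lg\in I_d\}$, which is nonzero exactly when $I_d\cap(L)_d\ne 0$, i.e.\ exactly when the restrictions of $F_1,\dots,F_r$ to the hyperplane $H=V(L)$ are $k$-linearly dependent. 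Since the rank of $\times L$ is lower semicontinuous in $L$, this map is injective for some $L$ if and only if it is injective for a general $L$; taking contrapositives, $I$ fails the WLP in degree $d-1$ (the map is non-injective for every $L$) if and only if $F_1,\dots,F_r$ become dependent on a general hyperplane, which is (ii).

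For (ii) $\Leftrightarrow$ (iii): let $S$ be the dual polynomial ring, and $W:=(I^{-1})_d=(I_d)^{\perp}\subset S_d$ under the apolarity pairing $R_d\times S_d\to k$, so $\dim_k W=\binom{n+d}{d}-r=:m$ and $\varphi$ is the rational map defined by $W$. A standard jet computation (the forms in $W$ being homogeneous of degree $d$, one uses the Euler relations) shows that at a general point $p$ the $(d-1)$-st osculating space of $X$ at $\varphi(p)$ is the projectivization of the span of the jets $\{\partial^{\alpha}G(p)\mid|\alpha|\le d-1,\ G\in W\}$, hence has dimension $m-1-\dim_k(W\cap J_p)$, where $J_p\subset S_d$ is the space of degree-$d$ forms vanishing to order $\ge d$ at $p$; note $\dim_k J_p=\binom{n+d-1}{n-1}$, and in coordinates in which $p$ is a coordinate point $J_p$ is the span of the degree-$d$ monomials missing one variable. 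Because $r\le\binom{n+d-1}{n-1}$ we have $m\ge\binom{n+d-1}{n}$, so $X$ satisfies a Laplace equation of order $d-1$ precisely when $\dim_k(W\cap J_p)>\binom{n+d-1}{n-1}-r$ for general $p$. Now dualize in $R_d$: in coordinates adapted to $p$ one identifies $J_p^{\perp}$ with the degree-$d$ component $(L_p)_d$ of the principal ideal generated by the linear form $L_p\in R_1\cong S_1^{*}$ corresponding to $p$, and an elementary count using $(W\cap J_p)^{\perp}=I_d+J_p^{\perp}$ gives
\[
\dim_k(W\cap J_p)=\binom{n+d-1}{n-1}-r+\dim_k(I_d\cap(L_p)_d).
\]
Thus condition (iii) is equivalent to $I_d\cap(L_p)_d\ne 0$ for general $p$; as $p$ runs over a general point of $\PP^n$, $L_p$ runs over a general linear form, so this is once more the dependence of $F_1,\dots,F_r$ on a general hyperplane, i.e.\ (ii).

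The delicate point is this middle equivalence: correctly identifying the $(d-1)$-osculating space of the projected Veronese $X$ with the apolar object $W\cap J_p$, and then tracking through the pairing that ``general point of $X$'' corresponds to ``general hyperplane of $\PP^n$''. One must also keep in mind that $X$ is genuinely $n$-dimensional, so that $\varphi$ is generically finite onto its image and a general point of $X$ is the image of a general point of $\PP^n$ — this is where the artinianness of $I$ enters — and one should check that the boundary case $r=\binom{n+d-1}{n-1}$, in which $(R/I)_{d-1}$ and $(R/I)_d$ have equal dimension and the target of $\varphi$ has dimension exactly $\binom{n+d-1}{n}-1$, introduces no degeneracy. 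The jet/osculating-dimension formula and the final dimension count are routine once the homogeneity has been handled.
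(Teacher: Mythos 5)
Your argument is correct and is essentially a faithful reconstruction of the proof of the cited result \cite[Theorem 3.2]{MM-RO}: the paper itself gives no proof of this theorem, only the reference. Both the reduction of (i)$\Leftrightarrow$(ii) to injectivity of $\times L$ via $I_{d-1}=0$ and the numerical bound, and the apolarity/osculating-space computation identifying the $(d-1)$-jet kernel $W\cap J_p$ with $I_d\cap (L_p)_d$ up to the constant $\binom{n+d-1}{n-1}-r$, are exactly the mechanism of the original argument, so there is nothing to reconcile.
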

\begin{proof} See \cite[Theorem 3.2]{MM-RO}.
\end{proof}

In view of this result, a {\em Togliatti system} is defined as an artinian ideal $I\subset R$ generated by $r\leq \binom{n+d-1}{n-1}$ forms of degree $d$ which fails the WLP in degree $d-1$.
This name is in honor of  Togliatti who proved that the only smooth
Togliatti system of cubics is $$I = (x_0^3,x_1^3,x_2^3,x_0x_1x_2)\subset k[x_0,x_1,x_2]$$
(see \cite{BK}, \cite{T1} and \cite{T2}). The systematic study of Togliatti systems was initiated in \cite{MM-RO}
and for recent results the reader can see \cite{MM-R1},  \cite{MkM-R}, \cite{AMRV}, \cite{M-RS}, \cite{MM-R2}, \cite{CMM-R} and \cite{CMM-RS}. In this paper, we will restrict our attention to a particular case of Togliatti systems, the so called GT-systems which we are going to introduce now.

\vspace{0.3cm}
\noindent {\bf Galois coverings and $GT-$systems.}
Let us recall the notion of a Galois covering. A {\em covering} of a variety $X$ consists of a variety $Y$ and a finite morphism $f: Y \to X$. The {\em  group of deck transformation $G:=\Aut(f)$} is defined to be the group of automorphisms of $Y$ commuting with $f$. We say that $f: Y \to X$ is a covering with group $\Aut(f)$. If the fibre of a covering $f: Y \to X$ over a general point consists of $d$ points we say that $f$ is a covering of degree $d$.
\begin{definition} \rm  A covering $f: Y \to X$ of a variety $X$ is a Galois covering if the group $\Aut(f)$ acts transitively on the fibre $f^{-1}(x)$ for some $x \in X$, and hence for all $x\in X$. We say that $f: Y \to X$ is a Galois covering with group $\Aut(f)$.
\end{definition}

Quotients varieties by finite groups of automorphisms work particularly well with respect to Galois coverings.
\begin{proposition} \label{Proposition: Galois Covering}Let $X$ be a projective variety and $G \subset \Aut(X)$ be a finite group.
If the quotient variety $X/G$ exists, then $\pi: X \to X/G$ is a Galois covering.
\end{proposition}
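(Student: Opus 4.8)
The plan is to unwind the definition of a Galois covering and verify three things: that $\pi$ is a finite morphism (so that $(X/G,\pi)$ is genuinely a covering in the sense above), that $G$ sits inside the deck transformation group $\Aut(\pi)$, and that $\Aut(\pi)$—indeed already $G$—acts transitively on every fibre. Since $X/G$ is assumed to exist as a variety, I would work throughout with a covering of $X/G$ by affine opens $V=\operatorname{Spec}A^{G}$ whose preimages $U=\pi^{-1}(V)=\operatorname{Spec}A$ are $G$-stable affine opens of $X$; on such a chart $\pi$ is induced by the inclusion $A^{G}\hookrightarrow A$.

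First, finiteness: since $G$ is finite, every $a\in A$ satisfies the monic equation $\prod_{g\in G}(t-g(a))=0$ with coefficients in $A^{G}$, so $A$ is integral over $A^{G}$ and, being a finitely generated $k$-algebra, finite over $A^{G}$; hence $\pi$ is finite. Next, the defining property of the quotient gives $\pi\circ g=\pi$ for every $g\in G$, and since each such $g$ is an automorphism of $X$ this exhibits an injection $G\hookrightarrow\Aut(\pi)$ (injective because $G\subseteq\Aut(X)$). The remaining and essential point is that the fibre of $\pi$ over a closed point $\pi(x)$ equals the orbit $G\cdot x$: one inclusion is clear, and for the other, if $y\notin G\cdot x$ then $G\cdot x$ and $\{y\}$ are disjoint finite sets of closed points of the $G$-stable chart containing $x$, so the Chinese Remainder Theorem yields $f\in A$ vanishing on $G\cdot x$ with $f(y)=1$; then $h:=\prod_{g\in G}\bigl(1-g(f)\bigr)$ lies in $A^{G}$, while $h(x)=1$ and $h(y)=0$, so $x$ and $y$ have distinct images under $\pi$. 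Thus $\pi^{-1}(\pi(x))=G\cdot x$, on which $G$—a fortiori $\Aut(\pi)$—acts transitively; since this is checked chart by chart it holds on all of $X$, and $\pi$ is a Galois covering (in fact $\Aut(\pi)=G$, though the definition only requires transitivity on fibres).

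The step I expect to be the only non-formal one, and hence the main obstacle, is the identification of fibres with orbits: producing the invariant function that separates an orbit from a point outside it, and justifying the reduction from the global projective picture to the $G$-stable affine charts $\operatorname{Spec}A^{G}$ that constitute $X/G$. The rest is straightforward bookkeeping with the notions of covering and deck transformation group.
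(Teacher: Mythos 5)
The paper does not give its own argument here: it simply defers to \cite[Proposition 2.3]{CMM-R}, so there is no in-text proof to compare yours against step by step. Your self-contained argument is the standard one and is essentially correct: integrality of $A$ over $A^{G}$ via the monic polynomial $\prod_{g\in G}(t-g(a))$ gives finiteness of $\pi$ on each chart; the universal property $\pi\circ g=\pi$ embeds $G$ into $\Aut(\pi)$; and the averaged function $h=\prod_{g\in G}\bigl(1-g(f)\bigr)$ is the right device to manufacture an invariant separating the orbit $G\cdot x$ from a point outside it, which identifies fibres with orbits and hence gives transitivity. Two points should be made explicit rather than left as caveats. First, the reduction to $G$-stable affine charts $U=\operatorname{Spec}A$ with $\pi^{-1}(V)=U$ and $\mathcal{O}(V)=A^{G}$ is not something you need to ``justify'' from scratch: it is built into what ``the quotient variety $X/G$ exists'' means in the sense of Serre (reference \cite{S} of the paper), and for projective $X$ such charts exist because any finite orbit lies in an affine open whose $G$-translates can be intersected; you should simply invoke this as part of the hypothesis. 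Second, in the fibre-equals-orbit step you take $y\notin G\cdot x$ and place it in ``the $G$-stable chart containing $x$'', but an arbitrary such $y$ need not lie in $U$; the argument is still fine because if $y\notin U=\pi^{-1}(V)$ then $\pi(y)\notin V$ and there is nothing to prove, while if $y\in U$ your Chinese Remainder construction applies. With those two sentences added, your proof is complete and, as far as one can tell, follows the same route as the cited source.
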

\begin{proof}
See \cite[Proposition 2.3]{CMM-R}.
\end{proof}

For further details on quotient varieties see for instance \cite{S}.
In \cite{MM-R2} and \cite{CMM-R}, the authors studied a particular class of Togliatti systems arising from actions of the cyclic group over $R$. In particular, let $d\geq 3$ and $1\leq a<b\leq d-1$ be integers such that $\GCD(a,b,d)=1$. We denote by $M_{d;a,b}$ the matrix $\diag(1,e^{a},e^{b})\subset \GL(3,k)$, where $e$ is a $d$th root of $1$. Then, let $\rho_{a,b}:C_{d} \rightarrow \GL(3,k)$ be the representation of $C_{d}=\langle \tau|\tau^{d}=1\rangle$, the cyclic group of order $d$, given by $\rho_{a,b}(\tau)=M_{d;a,b}\subset \GL(3,k)$. With this notation, they proved:

\begin{proposition}
Let $I_{d} \subset k[x_0,x_1,x_2]$ be the ideal generated by all forms of degree $d$ which are invariants of $\rho_{a,b}(C_{d})$. Let $\mu(I_{d})$ denote the minimal number of generators of $I_d$. If $\mu(I_{d})\leq d+1$, then $I_{d}$ is a Togliatti system. Moreover, the associated morphism $\varphi_{I_{d}}:\PP^{2}\rightarrow \PP^{\mu(I_{d})-1}$ is a Galois covering with cyclic group $\ZZ/d \ZZ$.
\end{proposition}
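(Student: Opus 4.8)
The plan is to handle the statement in three stages: describe $I_d$ explicitly, deduce the failure of the WLP from Theorem \ref{tea}, and identify $\varphi_{I_d}$ as a Galois covering.

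Since $M_{d;a,b}=\diag(1,e^a,e^b)$ scales the monomial $x_0^ix_1^jx_2^k$ by $e^{aj+bk}$, the ideal $I_d$ is the monomial ideal minimally generated by $\mathcal B_d:=\{x_0^ix_1^jx_2^k : i+j+k=d,\ aj+bk\equiv 0 \pmod d\}$; in particular $x_0^d,x_1^d,x_2^d\in\mathcal B_d$, so $R/I_d$ is artinian, and a short arithmetic argument (using $\GCD(a,b,d)=1$ and $a<b$) shows $\mathcal B_d$ also contains a monomial with all three exponents positive, whence $r:=\mu(I_d)=|\mathcal B_d|\ge 4$. Moreover $(I_d)_d=(R^{C_d})_d$, so $\dim_k(R/I_d)_{d-1}=\binom{d+1}{2}$ and $\dim_k(R/I_d)_d=\binom{d+2}{2}-r$.

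To prove $I_d$ is a Togliatti system I apply Theorem \ref{tea} with $n=2$. Since $r\le d+1=\binom{n+d-1}{n-1}$, it suffices to verify condition (ii): the generators of $I_d$ become $k$-linearly dependent on a general hyperplane $H$ of $\PP^2$. As $I_d$ is monomial, hence invariant under the coordinate torus, and the torus acts with a dense orbit on the set of linear forms with all coefficients nonzero, it is enough to check this for $H=V(\ell)$ with $\ell=x_0+x_1+x_2$; equivalently, to exhibit a nonzero $G\in R_{d-1}$ with $\ell G\in I_d$, i.e. with $\ell G$ a $k$-combination of the monomials in $\mathcal B_d$. I would produce such a $G$ explicitly: writing $(R^{C_d})_d=\bigoplus_{l=0}^{d} x_0^{\,d-l}\,(k[x_1,x_2]^{C_d})_l$ and restricting to the line $x_0=-x_1-x_2$, the task becomes showing that the restriction map $(R^{C_d})_d\to k[x_1,x_2]_d$ is not injective, which I would settle by exhibiting the relevant Newton-type identity (for $d=3$ this is $x_0^3+x_1^3+x_2^3-3x_0x_1x_2=(x_0+x_1+x_2)(x_0^2+x_1^2+x_2^2-x_0x_1-x_1x_2-x_0x_2)$), organised according to the $C_d$-weight grading of the invariant monomials. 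This non-injectivity is the technical core of the argument and the step I expect to be the main obstacle.

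For the last assertion, every monomial of $\mathcal B_d$ is $C_d$-invariant, so $\varphi_{I_d}\colon\PP^2\to\PP^{r-1}$ (a morphism, since $x_0^d,x_1^d,x_2^d$ have no common zero) is constant on $C_d$-orbits and factors as $\PP^2\xrightarrow{\ \pi\ }\PP^2/C_d\xrightarrow{\ \psi\ }\PP^{r-1}$; by Proposition \ref{Proposition: Galois Covering}, $\pi$ is a Galois covering with group $C_d\cong\ZZ/d\ZZ$. It then remains to see that $\psi$ is birational onto its image, i.e. that a general fibre of $\varphi_{I_d}$ is a single $C_d$-orbit. This is a character computation: $\diag(1,\zeta_1,\zeta_2)\in\GL(3,k)$ fixes every monomial of $\mathcal B_d$ iff (from $x_1^d,x_2^d\in\mathcal B_d$) $\zeta_1=e^u,\zeta_2=e^v$ with $u,v\in\ZZ/d$ and the functional $(j,k)\mapsto uj+vk$ vanishes on the subgroup of $(\ZZ/d)^2$ generated by $\{(j,k):x_0^{\,d-j-k}x_1^jx_2^k\in\mathcal B_d\}$; since $\GCD(a,b,d)=1$ this subgroup is the full kernel of $(j,k)\mapsto aj+bk$, of order $d$, whose annihilator is $\langle(a,b)\rangle$, so $(\zeta_1,\zeta_2)=(e^a,e^b)^t$ for some $t$. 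Hence the generic fibre of $\varphi_{I_d}$ is one $C_d$-orbit, $\psi$ is birational, and $\varphi_{I_d}$ is a Galois covering with cyclic group $\ZZ/d\ZZ$, as claimed.
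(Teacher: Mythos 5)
Your reduction of the statement to three steps is the right skeleton, and two of the three are in order. The description of $I_d$ as the monomial ideal on $\mathcal{B}_d$ is correct, and the Galois-covering argument (factor $\varphi_{I_d}$ through $\PP^2/C_d$ and show $\psi$ is generically injective by computing which diagonal matrices fix all the invariant monomials) is essentially the right one. The one assertion you leave unjustified there --- that the exponent vectors $(j,k)$ of the monomials of $\mathcal{B}_d$ generate the whole kernel $K$ of $(j,k)\mapsto aj+bk$ in $(\ZZ/d)^2$ --- is true and easy to repair: if $(j_0,k_0)\in K$ has representatives with $j_0+k_0>d$, then $(d-j_0,d-k_0)\in K$ has coordinate sum at most $d-1$, so every class of $K$ or its negative is realised by a monomial of $\mathcal{B}_d$. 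You should also say explicitly that two general points of a fibre differ by a diagonal matrix $\diag(1,\zeta_1,\zeta_2)$ (possible because $x_0^d,x_1^d,x_2^d\in\mathcal{B}_d$ forces all coordinates of such points to be nonzero), but that is a presentational matter.

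The genuine gap is exactly where you flag it: you reduce the failure of the WLP to producing a nonzero $G\in R_{d-1}$ with $\ell G\in (I_d)_d$, and then you do not produce $G$ beyond $d=3$; hunting for a ``Newton-type identity'' for each $d$ is not a proof, and this is the heart of the proposition. The missing idea is the norm construction, which is precisely the mechanism this paper uses in Section 4 to prove that $I_{2d}$ is a $GT$-system with group $D_{2d}$ (the paper's own proof of the present proposition is only a citation to \cite[Corollary 3.5]{CMM-R}, where the same device appears). For any linear form $L$, set $G:=\prod_{g\in C_d,\ g\neq \Id}g(L)$. Then $LG=\prod_{g\in C_d}g(L)$ is a product over the whole group, hence a $\rho_{a,b}(C_d)$-invariant form of degree $d$, hence lies in $(I_d)_d$; moreover $G\neq 0$ has degree $d-1$ and $(I_d)_{d-1}=0$. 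Since $\dim_k(R/I_d)_{d-1}=\binom{d+1}{2}\leq \binom{d+2}{2}-\mu(I_d)=\dim_k(R/I_d)_d$ when $\mu(I_d)\leq d+1$, maximal rank of $\times L$ would mean injectivity, which fails; equivalently $LG$ exhibits the linear dependence of the generators on $V(L)$ required by Theorem \ref{tea}(ii). Your $d=3$ identity $x_0^3+x_1^3+x_2^3-3x_0x_1x_2=\prod_{g\in C_3}g(x_0+x_1+x_2)$ is exactly this norm. Note that the argument works for every linear form $L$ at once, so your torus-orbit reduction to $\ell=x_0+x_1+x_2$ becomes unnecessary.
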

\begin{proof}
See \cite[Corollary 3.5]{CMM-R}.
\end{proof}

The above result motivates the following definition.

\begin{definition}
Let $G$ be a finite group. We say that a Togliatti system $I=(F_{1},\dotsc,F_{r})\subset R$ is a {\em GT-system with group $G$} if the associated morphism $\varphi_{I}:\PP^{n}\rightarrow \PP^{r-1}$ is a Galois covering with group $G$.
\end{definition}

The study of the GT-systems with cyclic group $\ZZ/d\ZZ$ is presented in \cite{CMM-R}, \cite{CMM-RS}, \cite{CM-R} and \cite{MM-R2}. In all these papers, the group $G$ is abelian and the GT-system is monomial. In this note we study GT-systems with non-abelian finite group $G$, more precisely with $G$ the dihedral group, and  we get the first examples of non-monomial GT-system.

\vspace{0.3cm}
\noindent {\bf Invariant theory of finite groups. }
A finite group of automorphisms of the affine space  $\mathbb A^{n+1}$ can be regarded as a finite group
$G \subset \GL(n+1,k)$ acting on the polynomial ring $R$. Let us denote by $R^{G} = \{f \in R \,\mid \, g(f) = f, \; \forall g \in G\}$ the ring of invariants of $G$. The ring $R^G$ inherits the natural grading of $R$, that is $R^{G} = \bigoplus_{t \geq 0} R^{G}_{t}$, where $R_{t}^{G}:= R_{t} \cap R^{G}$. We have the following result. 

\begin{lemma}\label{Corollary: Hilbertfunction GT} Fix $t \geq 1$ and let $G \subset \GL(n+1,k)$ be a finite linear group acting on $R$. Then
$$\dim_{k} R_{t}^{G} = \frac{1}{|G|} \sum_{g \in G} \trace(g^{(t)})$$
where $g^{(t)}$ is the linear map induced by the action of $g$ on $R_{t}$.
\end{lemma}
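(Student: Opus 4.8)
The plan is to realize the right-hand side as the trace of the Reynolds averaging operator on $R_t$, and then to recognize that operator as a projection of $R_t$ onto $R_t^G$, so that its trace computes $\dim_k R_t^G$. This is the classical argument underlying Molien's formula.

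First I would record that each $g \in G$ acts on $R$ by a graded $k$-algebra automorphism, hence restricts to a linear automorphism $g^{(t)}$ of the finite-dimensional space $R_t$, and $g \mapsto g^{(t)}$ is a homomorphism $G \to \GL(R_t)$. Since $\Char k = 0$, the integer $|G|$ is invertible in $k$, so we may form
$$\pi_t := \frac{1}{|G|} \sum_{g \in G} g^{(t)} \colon R_t \longrightarrow R_t.$$
Next I would check that $\pi_t$ is idempotent with image exactly $R_t^G$. For any $h \in G$ the assignment $g \mapsto hg$ is a bijection of $G$, so $h^{(t)} \circ \pi_t = \frac{1}{|G|}\sum_{g \in G}(hg)^{(t)} = \pi_t$; this shows both that $\pi_t(R_t) \subseteq R_t^G$ and, by averaging over $h$, that $\pi_t^2 = \frac{1}{|G|}\sum_{h \in G} h^{(t)} \circ \pi_t = \pi_t$. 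Conversely, if $v \in R_t^G$ then $g^{(t)}(v) = v$ for every $g$, whence $\pi_t(v) = v$; thus $\pi_t$ is the identity on $R_t^G$ and $\pi_t(R_t) = R_t^G$.

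Finally I would invoke the elementary fact that an idempotent endomorphism of a finite-dimensional vector space is diagonalizable with eigenvalues $0$ and $1$, the multiplicity of $1$ being the dimension of its image; hence $\trace(\pi_t) = \dim_k \pi_t(R_t) = \dim_k R_t^G$. Combining this with the definition of $\pi_t$ and the linearity of the trace yields
$$\dim_k R_t^G = \trace(\pi_t) = \frac{1}{|G|}\sum_{g \in G}\trace(g^{(t)}),$$
as claimed. There is no genuine obstacle in this argument; the only places the hypotheses enter are the invertibility of $|G|$ (using $\Char k = 0$) and the observation that the trace of a projection equals the rank of its image.
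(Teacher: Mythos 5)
Your argument is correct and complete: the averaging operator $\pi_t$ is a projection onto $R_t^G$, and the trace of a projection equals the dimension of its image. The paper itself does not prove this lemma but merely cites \cite[Lemma 2.2.2]{Sturmfels}, and the proof given there is precisely this classical Reynolds-operator argument, so your approach coincides with the intended one.
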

\begin{proof}
See \cite[Lemma 2.2.2]{Sturmfels}.
\end{proof}

Geometrically, $R^{G}$ is the coordinate ring of the quotient variety of $\mathbb A^{n+1}$ by $G$. To be more precise, let $\{f_{1},\hdots, f_{t}\}$ be a minimal set of  generators of the algebra $R^{G}$, often called a set of {\em fundamental invariants},  and let  $k[w_{1},\hdots, w_{t}]$ be  the polynomial ring in the new variables $w_{1},\hdots, w_{t}$. Then, the quotient variety of $\mathbb A^{n+1}$ by $G$ is given by the morphism $\pi: \mathbb A^{n+1}\to \pi(\mathbb A^{n+1})\subset \mathbb A^t$, such that $\, \pi(a_{0},\hdots, a_{n})= (f_{1}(a_{0},\hdots, a_{n}), \hdots, f_{t}(a_{0},\hdots, a_{n}))$. The ideal $I(\pi(\mathbb A^{n+1}))$ of the quotient variety
is called the {\em ideal of syzygies} among the invariants $f_{1},\hdots,
f_{t}$; it is the kernel of the homomorphism from $R$ to $k[w_{1},\hdots, w_{t}]$ defined by $w_{i} \to f_{i}$, $i = 1,\hdots, t$. We denote it by $\syz(f_{1},\hdots, f_{t})$. We have:

\begin{proposition}\label{Proposition: qv by fg acting linearly on poly ring} Let $G \subset \GL(n+1,k)$ be a finite group acting on $\mathbb A^{n+1}$. Let $f_{1},\hdots, f_{t}$ be a set of fundamental invariants and let $\pi: \af^{n+1} \to \af^{t}$ be the induced morphism. Then,
\begin{itemize}
\item [(i)] $\pi(\af^{n+1})$ is the affine quotient variety by $G$ with affine coordinate ring $R^{G}$.
\item [(ii)] $R^{G} \cong k[w_{1},\hdots, w_{t}]/\syz(f_{1},\hdots, f_{t}).$
\item [(iii)] $\pi$ is a Galois covering of $\pi(\af^{n+1})$ with group $G$. The cardinality of a general orbit $G(a)$, $a \in \af^{n+1}$, is called the degree of the covering.
\end{itemize}
\end{proposition}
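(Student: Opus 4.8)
The plan is to dispatch the three assertions in turn; each reduces to a standard fact about a finite group acting linearly on a polynomial ring, so the argument is short. I would begin with part (ii), which is the cleanest. Consider the $k$-algebra homomorphism $\Phi\colon k[w_{1},\dots,w_{t}]\to R$ sending $w_{i}\mapsto f_{i}$. Since $\{f_{1},\dots,f_{t}\}$ is a set of fundamental invariants, i.e.\ generates $R^{G}$ as a $k$-algebra, the image of $\Phi$ is exactly $R^{G}$, and by definition $\ker\Phi=\syz(f_{1},\dots,f_{t})$. The first isomorphism theorem then gives $k[w_{1},\dots,w_{t}]/\syz(f_{1},\dots,f_{t})\cong R^{G}$. (Implicit here is Noether's finiteness theorem: in characteristic zero $R^{G}$ is a finitely generated $k$-algebra, so such a finite set of fundamental invariants exists at all.)

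For part (i), write $A\subseteq R$ for the $k$-subalgebra generated by $f_{1},\dots,f_{t}$, so that $A=R^{G}$; by (ii), $\pi$ is, up to the presentation $k[w_{1},\dots,w_{t}]\twoheadrightarrow A$, the morphism dual to the inclusion $A\hookrightarrow R$, whence the coordinate ring of $\overline{\pi(\af^{n+1})}$ equals $R^{G}$. It remains to see that $\pi(\af^{n+1})$ is already closed and is precisely the set of $G$-orbits. Since $G$ is finite, $R$ is a finitely generated $R^{G}$-module --- each $x_{j}$ satisfies the monic equation $\prod_{g\in G}\bigl(T-g(x_{j})\bigr)=0$ over $R^{G}$ --- so $R^{G}\hookrightarrow R$ is a finite ring extension and $\pi$ is surjective onto $\operatorname{Spec}R^{G}$; hence $\pi(\af^{n+1})=\overline{\pi(\af^{n+1})}=\operatorname{Spec}R^{G}$. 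Moreover $\pi$ is constant on $G$-orbits because the $f_{i}$ are invariant, and $R^{G}$ separates orbits: given distinct orbits $O_{1}\neq O_{2}$, choose by interpolation $f\in R$ vanishing on $O_{1}$ and identically $1$ on $O_{2}$, and replace it by its Reynolds average $\tfrac{1}{|G|}\sum_{g\in G}g\cdot f\in R^{G}$, which has the same property. Therefore the fibres of $\pi$ are exactly the $G$-orbits, and $\pi\colon\af^{n+1}\to\operatorname{Spec}R^{G}$ realizes the quotient $\af^{n+1}/G$.

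Part (iii) is then the affine analogue of Proposition~\ref{Proposition: Galois Covering}. The fibre description from (i) shows $G$ embeds in the deck group $\Aut(\pi)$ --- each $g\in G$ commutes with $\pi$ since the $f_{i}$ are invariant --- and acts transitively on every fibre, so $\pi$ is a Galois covering whose group contains $G$; that the group equals $G$ follows from the Galois theory of the function-field extension $\operatorname{Frac}(R)/\operatorname{Frac}(R)^{G}$, which is Galois with group $G$ because $G$ acts faithfully, together with the fact that an automorphism of the affine variety $\af^{n+1}$ is determined by its action on functions. For the degree I would note that $G$ acts generically freely: a non-identity $g\in G\subseteq\GL(n+1,k)$ fixes $p\in\af^{n+1}$ only if $p\in\ker(g-\Id)$, a proper linear subspace, so deleting the finite union of these subspaces leaves a dense open set on which $G$ acts freely; thus a general orbit, that is, a general fibre of $\pi$, has exactly $|G|$ points and the covering has degree $|G|$.

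The one point that genuinely requires care --- the main obstacle, such as it is --- is the passage in (i) from ``$\operatorname{Spec}R^{G}$ is the categorical quotient'' to ``$\operatorname{Spec}R^{G}$ is literally the orbit space'': this uses both finiteness of $\pi$, for closedness and surjectivity of the image, and the orbit-separation property of $R^{G}$, and it is exactly here that characteristic zero --- equivalently, the availability of the Reynolds operator $f\mapsto\tfrac{1}{|G|}\sum_{g\in G}g\cdot f$ --- is essential. The remainder is formal bookkeeping.
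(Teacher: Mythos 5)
Your proof is correct: part (ii) is the first isomorphism theorem, part (i) is the standard finiteness-plus-Reynolds-operator argument that $\operatorname{Spec}R^{G}$ is the orbit space, and part (iii) correctly identifies the deck group via $\operatorname{Frac}(R)/\operatorname{Frac}(R)^{G}$ and gets the degree from generic freeness of a faithful linear action. The paper gives no argument of its own here --- it simply cites \cite{Stanley}, Section 6, together with Proposition \ref{Proposition: Galois Covering} --- and what you have written is exactly the standard proof contained in those references, so there is nothing to contrast.
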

\begin{proof} See \cite[Section 6]{Stanley} and Proposition \ref{Proposition: Galois Covering}.
\end{proof}

If we can find a homogeneous set of fundamental invariants $\{f_1,\hdots,f_t\}$ such that $\pi: \PP^n \to \PP^{t-1}$ is a morphism, then the projective version of \ref{Proposition: qv by fg acting linearly on poly ring} is true.

\section{The algebra of invariants of the dihedral group.}
\label{the algebra of invariants}

Through this section, we study the action of the dihedral group on the polynomial ring $R=k[x_0,x_1,x_2]$. We fix integers $3 \leq d$, $0 < a < \frac{d}{2}$ with $\GCD(d,a) = 1$ and $\varepsilon$ a $2d$th primitive root of $1$. We set $e = \varepsilon^2$ and let $\rho_{a}:D_{2d}\rightarrow \GL(3,k)$ be the linear representation of $D_{2d}=\langle \tau, \epsilon|\tau^{d}=\eta^{2}=(\eta\tau)^{2}=1\rangle$, the dihedral group of order $2d$, defined by (see \cite{Serre}):
$$
\rho_{a}(\tau)=M_{d;a,d-a}=
\left(
\begin{array}{lll}
1&0&0\\
0&e^{a}&0\\
0&0&e^{d-a}
\end{array}
\right)\qquad\text{and}\qquad
\rho_{a}(\eta)=\sigma = \begin{pmatrix}
1 & 0 & 0\\
0 & 0 & 1\\
0 & 1 & 0
\end{pmatrix}.$$
It is the direct sum of the trivial representation in $\GL(1,k)$ and a faithful representation in $\GL(2,k)$ of $D_{2d}$. Therefore, the ring $R^{D_{2d}}$ of invariants is generated by the three algebraically independent invariants $x_0$, $x_1x_2$ and $x_1^d + x_2^d$ of $D_{2d}$ (see \cite{Shephard-Todd} and \cite{Stanley}). Thus $R^{D_{2d}} = k[x_0,x_1x_2,x_1^d + x_2^d] = \oplus_{t \geq 0} R_{t} \cap R^{D_{2d}}$ is a non-standard graded polynomial ring. 

Since $\GCD(a,d) = 1$, the finite cyclic group $\langle \rho_{a}(\tau) \rangle$ of order $d$ coincides with  $\Gamma := \langle M_{d;1,d-1} \rangle \subset \GL(3,k)$. We set $C_{2d}$ the finite cyclic group of order $2d$ generated by $\varepsilon \Id$ and we define $\overline{\Gamma} \subset \GL(3,k)$ to be the cyclic extension of $\Gamma$, i.e. $\overline{\Gamma} = \Gamma \times C_{2d}$. Similarly, let 
$\overline{D_{2d}} = D_{2d} \times C_{2d} \subset \GL(3,k)$ be the cyclic extension of $D_{2d}$. We see the ring of invariants $R^{\overline{D_{2d}}}$ of $\overline{D_{2d}}$ as a $k$-graded subalgebra of $R$ and $R^{D_{2d}}$ as follows:
$$R^{\overline{D_{2d}}} = \bigoplus_{t \geq 0} R^{\overline{D_{2d}}}_{t}, \quad \text{where} \quad  R^{\overline{D_{2d}}}_t := R^{D_{2d}}_{2dt} = R_{2dt} \cap R^{D_{2d}}.$$
We relate $R^{\overline{D_{2d}}}$ to the ring $R^{\overline{\Gamma
}}$ studied in \cite{CMM-R} and this connection allows to compute the Hilbert function, Hilbert polynomial and Hilbert series of $R^{\overline{D_{2d}}}$. We also provide a complete description of a homogeneous $k$-basis for each $R^{\overline{D_{2d}}}_{t}$, $t \geq 1$. Our main result shows that the graded $k$-algebra $R^{\overline{D_{2d}}}$ is generated in degree $1$. Let us begin with the following remarks.

\begin{remark} \rm The action of $\sigma$ on a monomial $x_0^{a_0}x_1^{a_1}x_2^{a_2}$ is given by $x_0^{a_0}x_1^{a_2}x_2^{a_1}$. Therefore, the action of $\langle M_{d;1,d-1}^{l}\sigma \rangle \subset \GL(3,k)$ is the same as the action of $\langle \diag(1,e^{d-1},e)^l \rangle \subset \GL(3,k)$ for any $0\leq l\leq d-1$.
\end{remark}

\begin{remark}\label{Remark: equivalent action} \rm
\begin{itemize}
\item [(i)] If a monomial of degree $2dt$ is an invariant of $\Gamma$, it is also an invariant of $\langle \diag(1,e^{d-1},e) \rangle \subset \GL(3,k)$. Actually, a monomial $x_0^{a_{0}}x_1^{a_{1}}x_2^{a_{2}}$ of degree $2dt$ is  an invariant of $\Gamma$ if and only if there exists $r \in \{0,\hdots, 2t(d-1)\}$ such that $(a_{0},a_{1},a_{2}) \in \ZZ_{\geq 0}^{3}$ is a solution of the integer system:
$$\left\{\begin{array}{rcl}
a_{0} + a_{1} + a_{2} & = & 2dt\\
a_{1} + (d-1)a_{2} & = &rd.
\end{array}\right.$$
Now $0 < (d-1)a_{1} + a_{2} = da_{1} - (a_{1} - a_{2}) = d(a_1 + a_{2}) - rd$ is also a multiple of $d$. Hence $(a_{0},a_{1},a_{2})$ is also a solution of the system
$$\left\{\begin{array}{rcl}
a_{0} + a_{1} + a_{2} &  = & 2dt\\
(d-1)a_{1} + a_{2} & = &(a_1 + a_{2}-r)d
\end{array}\right.$$
which implies that $x_0^{a_{0}}x_1^{a_{1}}x_2^{a_{2}}$ is an invariant of $\langle \diag(1,e^{d-1},e) \rangle$.
\item [(ii)] Any monomial of degree $2dt$ of the form $x_0^{2dt-2a_1}x_1^{a_1}x_2^{a_1}$, $a_{1} = 0,\hdots,td$, is an invariant of $\Gamma$. There are exactly $td+1$ monomials of degree $td$ of such form. 
\end{itemize}
\end{remark}

Next, we compute the Hilbert function $\HF(R^{\overline{D_{2d}}},t)$ of $R^{\overline{D_{2d}}}$.  Fix $t \geq 1$. Since $\HF(R^{\overline{D_{2d}}},t)$ is equal to the Hilbert function $\HF(R^{D_{2d}},2dt)$ of $R^{D_{2d}}$,
by Lemma \ref{Corollary: Hilbertfunction GT} we have
$$\HF(R^{\overline{D_{2d}}},t) = \frac{1}{2d} \sum_{g \in \rho(D_{2d})} \trace(g^{(2dt)}) = \frac{1}{2d} \trace(\sum_{g \in \rho(D_{2d})} g^{(2dt)}),$$
where $g^{(2dt)}$ is the restriction of $g$ to $R_{2dt}$.  We choose the set of all monomials of degree $2dt$ as a basis $\mathcal{B}$ of $R_{2dt}$, namely $\mathcal{B}=\{m_{1},\hdots, m_{N}\}$, where $N=\dim_{k}R_{2dt}=\binom{2dt+2}{2}$.

\begin{proposition} \label{Proposition:HilbertFunctionD2d} With the above notation,
 $$\HF(R^{\overline{D_{2d}}},t) = \frac{2dt^2 + (d + \GCD(d,2) + 2)t + 2}{2}.$$
\end{proposition}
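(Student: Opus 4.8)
The plan is to evaluate the trace sum already set up just before the statement. By Lemma~\ref{Corollary: Hilbertfunction GT} applied to $\rho_{a}(D_{2d}) \subset \GL(3,k)$ in degree $2dt$, together with $\HF(R^{\overline{D_{2d}}},t) = \dim_{k}R^{D_{2d}}_{2dt}$, we have $\HF(R^{\overline{D_{2d}}},t) = \frac{1}{2d}\sum_{g \in \rho_{a}(D_{2d})}\trace(g^{(2dt)})$. I would split the $2d$ group elements into the $d$ ``rotations'' $\rho_{a}(\tau)^{j}=M_{d;a,d-a}^{j}$ and the $d$ ``reflections'' $\rho_{a}(\tau)^{j}\rho_{a}(\eta)=M_{d;a,d-a}^{j}\sigma$ ($0\le j\le d-1$), and handle the two partial sums separately.

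Since $\GCD(a,d)=1$, the rotations are exactly the elements of the cyclic group $\Gamma=\langle M_{d;1,d-1}\rangle$, so by Lemma~\ref{Corollary: Hilbertfunction GT} applied to $\Gamma$ we get $\frac{1}{2d}\sum_{j=0}^{d-1}\trace((M_{d;a,d-a}^{j})^{(2dt)})=\frac12\HF(R^{\Gamma},2dt)$. A degree-$2dt$ monomial $x_{0}^{a_{0}}x_{1}^{a_{1}}x_{2}^{a_{2}}$ is $\Gamma$-invariant iff $d\mid a_{1}-a_{2}$ (cf. Remark~\ref{Remark: equivalent action}), so $\HF(R^{\Gamma},2dt)$ counts the triples $(a_{0},a_{1},a_{2})\in\ZZ_{\ge0}^{3}$ with $a_{0}+a_{1}+a_{2}=2dt$ and $a_{1}\equiv a_{2}\pmod d$. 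Writing $a_{1}=q_{1}d+r$, $a_{2}=q_{2}d+r$ with $0\le r\le d-1$, $q_{1},q_{2}\ge0$, the condition $a_{0}\ge0$ becomes $q_{1}+q_{2}\le 2t-\lceil 2r/d\rceil$, and for each such bound the number of admissible pairs $(q_{1},q_{2})$ is the corresponding triangular number; summing over $r$ (splitting the range according to whether $2r\le d$) and simplifying gives $\HF(R^{\Gamma},2dt)=2dt^{2}+(\GCD(d,2)+2)t+1$ --- equivalently, this is the value $\HF(R^{\overline{\Gamma}},t)$ computed in \cite{CMM-R}.

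For each reflection $g=M_{d;a,d-a}^{j}\sigma$, the eigenvalues on $R_{1}$ are $1,1,-1$: indeed $g$ fixes $x_{0}$, and its restriction to $\langle x_{1},x_{2}\rangle$ is an involution of determinant $-1$. Hence its trace on $R_{2dt}=\mathrm{Sym}^{2dt}R_{1}$ equals $\sum_{b_{0}+b_{1}+b_{2}=2dt}(-1)^{b_{2}}=dt+1$, so $\frac{1}{2d}\sum_{j=0}^{d-1}\trace((M_{d;a,d-a}^{j}\sigma)^{(2dt)})=\frac{d(dt+1)}{2d}=\frac{dt+1}{2}$. Adding the two contributions yields $\HF(R^{\overline{D_{2d}}},t)=\frac12\bigl(2dt^{2}+(\GCD(d,2)+2)t+1\bigr)+\frac{dt+1}{2}=\frac{2dt^{2}+(d+\GCD(d,2)+2)t+2}{2}$, as claimed. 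The only slightly delicate steps are the eigenvalue computation for the reflections --- cleanest through the involution/determinant argument, which avoids any action-convention ambiguity --- and the $d$-even versus $d$-odd case split in the lattice-point count for $\HF(R^{\Gamma},2dt)$; both are elementary but must be carried out with care.
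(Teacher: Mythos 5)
Your proof is correct, and while its skeleton coincides with the paper's --- both evaluate $\frac{1}{2d}\sum_{g}\trace(g^{(2dt)})$ by separating the $d$ rotations from the $d$ reflections, arriving at the rotation contribution $d\cdot\#\{\Gamma\text{-invariant monomials of degree }2dt\}$ and the reflection contribution $d(dt+1)$ --- you execute both halves differently. For the rotations, the paper simply quotes \cite[Theorems 4.5 and 4.11]{CMM-R} for the count $\mu^{c}_{2dt}=2dt^{2}+(\GCD(d,2)+2)t+1$, whereas you rederive it by a lattice-point count (invariance is $a_1\equiv a_2 \pmod d$; write $a_i=q_id+r$ and sum triangular numbers over $r$, splitting at $2r\le d$ versus $2r>d$); I checked that this gives the stated value for both parities of $d$, so your argument is self-contained where the paper's is not. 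For the reflections, the paper works monomial by monomial in the basis of $R_{2dt}$: a reflection contributes to the diagonal of $\sum_g g^{(2dt)}$ only at the $dt+1$ monomials $x_0^{2dt-2a_1}(x_1x_2)^{a_1}$, each with coefficient $1$. Your route --- each reflection is an involution with eigenvalues $1,1,-1$ on $R_1$, hence has trace $\sum_{b_0+b_1+b_2=2dt}(-1)^{b_2}=dt+1$ on $R_{2dt}$ --- reaches the same number without identifying the fixed monomials, and, as you observe, it is immune to action-convention issues because a reflection is its own inverse. One cosmetic slip: with the grading of \cite{CMM-R}, the quantity you compute is $\HF(R^{\overline{\Gamma}},2t)$ (degree-$2dt$ forms sit in degree $2t$ of $R^{\overline{\Gamma}}$), not $\HF(R^{\overline{\Gamma}},t)$; since that remark is only an aside and you obtain the formula independently, it does not affect the proof.
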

\begin{proof}
Let $m_{i}$ be a monomial in $\mathcal{B}$ of degree $2dt$. We denote by $M$ the matrix which represents the linear map $\sum_{g \in \rho(D_{2d})}g^{(2dt)}$ in the above basis. We distinguish two cases.

\vspace{0.5cm}
\noindent\underline{Case 1:} $m_{i} \in R^{\Gamma}$. Then by Remark
\ref{Remark: equivalent action},
$$M_{(i,i)} = \left\{\begin{array}{rcl}
2d & \text{if}& \sigma(m_{i}) = m_{i}\\
d           & \text{if}& \sigma(m_{i}) \neq m_{i}\\
\end{array}\right.$$

\vspace{0.5cm}
\noindent\underline{Case 2:} $m_{i} \notin R^{\Gamma}$. If $\xi$ is a $d$th root of unity, we have the equality
$1 + \xi + \cdots + \xi^{d-1} = 0$. This, in addition to Remark
\ref{Remark: equivalent action}, gives $M_{(i,i)} = 0$.

Let $\mu_{2dt}^{c}$ be the number of monomials of degree $2dt$ in $R^{\Gamma}$. Thus, we have obtained that
$$(2d)\HF(R^{\overline{D_{2d}}},t) = d(\mu_{2dt}^{c} + td+1)).$$
 By \cite[Theorem 4.5]{CMM-R}, $\mu_{2dt}^{c}$ is equal to the Hilbert function $\HF(R^{\overline{\Gamma}},2t)$ of the ring $R^{\overline{\Gamma}}$ in degree $2t$. By \cite[Theorem 4.11]{CMM-R}
$\mu_{2dt}^{c} = 2dt^2 + 2t+\GCD(2,d)t+1,$ which completes the proof.
\end{proof}

From the above result, we directly obtain the Hilbert polynomial $\HP(R^{\overline{D_{2d}}},t)$ and the Hilbert series $\HS(R^{\overline{D_{2d}}},z)$ of the ring $R^{\overline{D_{2d}}}$.
\begin{proposition} \label{Proposition:Hilbert Function Series} With the above notation, the Hilbert polynomial and the Hilbert series of $R^{\overline{D_{2d}}}$ are given by the following expressions:
\begin{itemize}
\item[(i)] $\begin{array}{ll}
 \hspace{0.5cm} & \displaystyle{\HP(R^{\overline{D_{2d}}},t) = \frac{1}{2}(2dt^2 + (d + \GCD(d,2)+2)t + 2)} 
 \end{array}$
 
 \vspace{0.2cm}
 \item[(ii)] $\begin{array}{ll}
 \hspace{0.5cm} & \displaystyle{\HS(R^{\overline{D_{2d}}},z) =\frac{1}{(1-z)^3} \left(\frac{d-\GCD(d,2)}{2}z^2 + \frac{3d+\GCD(d,2)-2}{2}z + 1 \right).}
\end{array}$
\end{itemize}
 
\end{proposition}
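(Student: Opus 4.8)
The plan is to deduce both statements directly from the formula for $\HF(R^{\overline{D_{2d}}},t)$ established in Proposition \ref{Proposition:HilbertFunctionD2d}. First I would observe that the polynomial
$$P(t) := \frac{1}{2}\bigl(2dt^2 + (d+\GCD(d,2)+2)t+2\bigr) = dt^2 + \frac{d+\GCD(d,2)+2}{2}\,t + 1$$
agrees with $\HF(R^{\overline{D_{2d}}},t)$ not only for $t \geq 1$, by Proposition \ref{Proposition:HilbertFunctionD2d}, but also for $t = 0$: indeed $R^{\overline{D_{2d}}}_0 = R_0 = k$ is one-dimensional and $P(0) = 1$. Hence $\HF(R^{\overline{D_{2d}}},t) = P(t)$ for every $t \geq 0$, and by definition of the Hilbert polynomial this gives part (i) at once.

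For part (ii), I would compute the Hilbert series as the generating function $\HS(R^{\overline{D_{2d}}},z) = \sum_{t\geq 0} \HF(R^{\overline{D_{2d}}},t)\,z^t = \sum_{t\geq 0} P(t)\,z^t$, which is legitimate precisely because $\HF = \HP$ in all degrees $t \geq 0$, so no correction term from low degrees appears. Using the standard identities $\sum_{t\geq 0} z^t = (1-z)^{-1}$, $\sum_{t\geq 0} tz^t = z(1-z)^{-2}$ and $\sum_{t\geq 0} t^2 z^t = z(1+z)(1-z)^{-3}$, one obtains
$$\HS(R^{\overline{D_{2d}}},z) = \frac{d\,z(1+z) + \dfrac{d+\GCD(d,2)+2}{2}\, z(1-z) + (1-z)^2}{(1-z)^3}.$$
Expanding and collecting the numerator by powers of $z$ yields constant term $1$, coefficient of $z$ equal to $d + \frac{d+\GCD(d,2)+2}{2} - 2 = \frac{3d+\GCD(d,2)-2}{2}$, and coefficient of $z^2$ equal to $d - \frac{d+\GCD(d,2)+2}{2} + 1 = \frac{d-\GCD(d,2)}{2}$, which is exactly the claimed expression.

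There is no genuine obstacle here: both parts are formal consequences of Proposition \ref{Proposition:HilbertFunctionD2d}. The only point requiring a moment's care is the evaluation at $t = 0$, which is what allows the Hilbert function to be identified with the Hilbert polynomial in \emph{every} degree and hence the series to be summed term by term; once that is in place, the rest is a routine manipulation of rational generating functions.
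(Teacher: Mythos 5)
Your proposal is correct and follows the same route as the paper, which states that the proposition is obtained ``directly'' from Proposition \ref{Proposition:HilbertFunctionD2d} and omits the computation; you have simply supplied the routine details (the check at $t=0$ and the standard generating-function identities), and your expansion of the numerator does yield the stated coefficients $\frac{3d+\GCD(d,2)-2}{2}$ and $\frac{d-\GCD(d,2)}{2}$.
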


Next, we use the relation between the two rings $R^{\overline{D_{2d}}}$ and $R^{\overline{\Gamma}}$ to determine a $k$-basis of any vector space $R^{\overline{D_{2d}}}_{t}$, $t \geq 1$, and to find a $k$-algebra basis of $R^{\overline{D_{2d}}}$.
Fix $t \geq 1$ and consider $R^{\overline{D_{2d}}}_{t}.$

\begin{proposition} \label{Proposition:GradedBasisInvariant} A $k$-basis $\mathcal{B}_{2dt}$ of the vector space $R^{\overline{D_{2d}}}_{t}$ is formed by:
\begin{itemize}
\item[(i)] the set of $td + 1$ monomial invariants $x_0^{2dt},x_0^{2dt-2}x_1x_2,
x_0^{2dt-4}x_1^2x_2^2, \hdots, x_1^{td}x_2^{td}$ of $\Gamma$ of degree $2dt$; and
\item[(ii)] the set of all binomials  $x_0^{a_{0}}x_1^{a_{1}}x_2^{a_{2}}+ x_0^{a_{0}}x_1^{a_{2}}x_2^{a_{1}}$ of degree $2dt$ such that $a_{1} \neq a_{2}$ and $x_0^{a_{0}}x_1^{a_{1}}x_2^{a_{2}} \in R^{\Gamma}$.
\end{itemize}
\end{proposition}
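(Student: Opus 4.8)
The plan is to show that the set $\mathcal{B}_{2dt}$ described in (i) and (ii) is both linearly independent and spans $R^{\overline{D_{2d}}}_{t} = R^{D_{2d}}_{2dt}$, and then to check that its cardinality matches the Hilbert function computed in Proposition \ref{Proposition:HilbertFunctionD2d}. First I would observe that $R^{\overline{D_{2d}}}_{t}$ consists exactly of the forms of degree $2dt$ invariant under both $\Gamma = \langle M_{d;1,d-1}\rangle$ and $\sigma$, since $C_{2d}$ acts trivially in each degree $2dt$ and, by the Remark preceding this proposition, the action of $D_{2d}$ on monomials of degree $2dt$ is generated by $\Gamma$ and $\sigma$. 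Because $\Gamma$ acts diagonally, the space $R^{\Gamma}_{2dt}$ has as a $k$-basis the set of $\Gamma$-invariant monomials of degree $2dt$; the operator $\sigma$ permutes this monomial basis, swapping $x_1$ and $x_2$ exponents. The $\sigma$-invariants inside $R^{\Gamma}_{2dt}$ are therefore spanned by the $\sigma$-fixed monomials (those with $a_1 = a_2$, i.e. the $td+1$ monomials $x_0^{2dt-2j}x_1^j x_2^j$, $j = 0, \dots, td$, which are automatically $\Gamma$-invariant by Remark \ref{Remark: equivalent action}(ii)) together with the symmetrized sums $x_0^{a_0}x_1^{a_1}x_2^{a_2} + x_0^{a_0}x_1^{a_2}x_2^{a_1}$ over the $\sigma$-orbits $\{a_1 \neq a_2\}$ of $\Gamma$-invariant monomials. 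This is exactly the list in (i)--(ii).

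Linear independence is immediate: the monomials appearing in (i) and (ii) are pairwise distinct across all listed elements (each symmetrized binomial involves two monomials with $a_1 \neq a_2$, a fixed monomial has $a_1 = a_2$, and distinct $\sigma$-orbits are disjoint), so $\mathcal{B}_{2dt}$ is a subset of symmetrizations of distinct monomial-basis elements and hence linearly independent. For spanning, I would take an arbitrary $F \in R^{\overline{D_{2d}}}_{t}$, expand it in the monomial basis of $R^{\Gamma}_{2dt}$ (legitimate since $F$ is $\Gamma$-invariant), and use $\sigma(F) = F$ to conclude that the coefficients of $m$ and $\sigma(m)$ agree; grouping terms by $\sigma$-orbit then expresses $F$ as a $k$-combination of the fixed monomials and the symmetrized binomials, i.e. of $\mathcal{B}_{2dt}$.

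Finally I would verify the count: the number of $\Gamma$-invariant monomials of degree $2dt$ is $\mu_{2dt}^c = 2dt^2 + 2t + \GCD(2,d)t + 1$ by \cite[Theorems 4.5 and 4.11]{CMM-R}, of which $td+1$ are $\sigma$-fixed, leaving $\mu_{2dt}^c - (td+1)$ monomials split into $\tfrac12(\mu_{2dt}^c - td - 1)$ orbits of size two; hence
$$|\mathcal{B}_{2dt}| = (td+1) + \frac{\mu_{2dt}^c - td - 1}{2} = \frac{\mu_{2dt}^c + td + 1}{2} = \frac{2dt^2 + (d + \GCD(d,2) + 2)t + 2}{2},$$
which agrees with $\HF(R^{\overline{D_{2d}}},t)$ from Proposition \ref{Proposition:HilbertFunctionD2d}. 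Since $\mathcal{B}_{2dt}$ is a linearly independent subset of $R^{\overline{D_{2d}}}_{t}$ of the right cardinality, it is a basis. The only mildly delicate point — and the step I would be most careful about — is confirming that every $\Gamma$-invariant monomial of degree $2dt$ with $a_1 = a_2$ is already on the list (and that there are no $\Gamma$-invariant monomials with $a_1 = a_2$ other than the $x_0^{2dt-2j}x_1^jx_2^j$), which follows from the diagonal description of $\Gamma$ and Remark \ref{Remark: equivalent action}, but deserves an explicit sentence so the parity bookkeeping in the final count is transparent.
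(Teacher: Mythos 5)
Your proposal is correct and follows essentially the same route as the paper: the paper's proof likewise counts the $\tfrac12(\mu_{2dt}^{c}-td-1)$ binomials via Remark \ref{Remark: equivalent action}(i), notes the $k$-linear independence of the forms in (i) and (ii), and concludes by matching the cardinality against $\HF(R^{\overline{D_{2d}}},t)$ from Proposition \ref{Proposition:HilbertFunctionD2d}. Your additional direct spanning argument via the $\sigma$-orbit decomposition of the monomial basis of $R^{\Gamma}_{2dt}$ is a sound (and slightly more self-contained) supplement that the paper leaves implicit in the dimension count.
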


\begin{proof} By Remark \ref{Remark: equivalent action}(i), there are exactly %$\frac{\mu_{2dt}^{c} - td-1}{2}$
${\frac{1}{2}}(\mu_{2dt}^{c} - td-1)$ binomials  of the form $x_0^{a_{0}}x_1^{a_{1}}x_2^{a_{2}} + x_0^{a_{0}}x_1^{a_{2}}x_2^{a_{1}} \in R^{\Gamma}$ of degree $2dt$ with $a_{1} \neq a_{2}$. Since the forms in $(i)$ and $(ii)$ are $k$-linearly independent, the result follows from Proposition \ref{Proposition:HilbertFunctionD2d}.
\end{proof}

We illustrate the above result with a couple of examples.

\begin{example} \label{Example: Basist1t2}\rm

\begin{itemize}
\item[(i)] For $d = 3$ and $D_{2\cdot3} = \langle M_{3;1,2},\sigma \rangle$, we have:
\smallskip

\noindent \hspace{-0.5cm} $\mathcal{B}_{2\cdot 3} = \{x_0^{6}, x_0^{3}x_1^{3} + x_0^3x_2^{3}
, x_0^{4}x_1x_2, x_1^{6}+x_2^{6}, x_0x_1^4x_2+x_0x_1x_2^4, x_0^2x_1^2x_2^2, x_1^3x_2^3\}$, $\HF(R^{\overline{D_{2\cdot 3}}},1) = 7$.
\smallskip

\noindent \hspace{-0.5cm} $\mathcal{B}_{4\cdot 3} = \{x_0^{12},x_0^9x_1^3+x_0^{9}x_2^{3} ,x_0^{10} x_1x_2, x_0^6 x_1^6 + x_0^{6}x_2^{6}, $ $x_0^7 x_1^4 x_2 + x_0^{7} x_1x_2^{4} ,x_0^8 x_1^2 x_2^2,x_0^3 x_1^9 + x_0^3 x_2^9, x_0^{4}x_1^7x_2 + x_0^4x_1x_2^7, x_0^5 x_1^5 x_2^2 + x_0^{5}x_1^2x_2^5 ,x_0^6 x_1^3 x_2^3,x_1^{12}+x_2^{12},x_0 x_1^{10} x_2 + x_0x_1x_2^{10} ,x_0^2 x_1^8 x_2^2 + x_0^2 x_1^2 x_2^8, x_0^3 x_1^6 x_2^3 + x_0^3 x_1^3 x_2^6, $ $x_0^4 x_1^4 x_2^4, $ $x_1^9 x_2^3 + x_1^3x_2^9 ,x_0 x_1^7 x_2^4 + x_0 x_1^4x_2^7 ,x_0^2 x_1^5 x_2^5,x_1^6 x_2^6\}$, $\HF(R^{\overline{D_{2\cdot 3}}},2) = 19$.
\smallskip

\item[(ii)] For $d = 4$ and $D_{2 \cdot 4} = \langle M_{4;1,3},\sigma \rangle$, we have:
\smallskip

\noindent \hspace{-0.5cm} $\mathcal{B}_{2\cdot 4} = \{x_0^{8}, x_0^{4}x_1^{4} +
x_0^{4}x_1^{4}, x_0^{6}x_1x_2, x_1^{8}+x_2^{8}, x_0^{2}x_1^{5}x_2+x_0^{2}x_1x_2^{5}, x_0^{4}x_1^{2}x_2^{2}, x_1^{6}x_2^{2} + x_1^{2}x_2^{6}, x_0^{2}x_1^{3}x_2^{3},$ $ x_1^{4}x_2^{4}\}$, $\HF(R^{\overline{D_{2 \cdot 4}}},1) = 9$.
\smallskip

\noindent \hspace{-0.5cm} $\mathcal{B}_{4\cdot 4} =\{x_0^{16},x_0^{12} x_1^4 + x_0^{12} x_2^{4} ,x_0^{14} x_1 x_2, x_0^8 x_1^8 + x_0^8x_2^8, x_0^{10} x_1^5 x_2 + x_0^{10}x_1x_2^5 ,x_0^{12} x_1^2 x_2^2,x_0^4 x_1^{12} + x_0^4 x_2^{12} ,x_0^6 x_1^9 x_2 + x_0^6x_1x_2^9,x_0^8 x_1^6 x_2^2 + x_0^{8}x_1^2x_2^6,x_0^{10} x_1^3 x_2^3,x_1^{16}+x_2^{16}, x_0^2 x_1^{13} x_2 + x_0^2 x_1 x_2^{13}, x_0^4 x_1^{10} x_2^2 + x_0^4x_1^2 x_2^{10} ,x_0^6 x_1^7 x_2^3 + x_0^6x_1^3x_2^7, x_0^8 x_1^4 x_2^4,x_1^{14}x_2^2 + x_1^2x_2^{14} ,x_0^2 x_1^{11} x_2^3 + x_0^2 x_1^3 x_2^{11} ,x_0^4 x_1^8 x_2^4 + x_0^4 x_1^4 x_2^8,x_0^6 x_1^5 x_2^5, $ $x_1^{12} x_2^4 + x_1^4x_2^{12},x_0^2 x_1^9 x_2^5 + x_0^2x_1^5x_2^9, x_0^4 x_1^6 x_2^6,x_1^{10}x_2^6 + x_1^6x_2^{10}, x_0^2 x_1^7 x_2^7,x_1^8 x_2^8\}$, $\HF(R^{\overline{D_{2\cdot 4}}},2) = 25$.
\end{itemize}
\end{example}

The next goal is to prove that $\mathcal{B}_{2d}$ is a set of fundamental invariants of $\overline{D_{2d}}$. To achieve it, we use the natural structure of $R^{\overline{D_{2d}}}$ as a subring of $R^{D_{2d}}$. We set $y_0 = x_0, \, y_1 = x_{1}x_{2}$ and $y_2 = x_1^d + x_2^d$, as we have seen $R^{D_{2d}} = k[y_0,y_1,y_2]$ with $\deg(y_0) = 1$, $\deg(y_1) = 2$ and $\deg(y_2) = d$. With this notation, for any $t \geq 1$ we have that $R^{\overline{D_{2d}}}_t = k[y_0,y_1,y_2]_{2td}$ is the $k$-vector space with monomial basis $\mathcal{A}_{2dt} = \{y_0^{b_0} y_1^{b_1} y_2^{b_2} \,\mid \, b_0 + 2b_1 + db_2 = 2td\}$. In particular, for $t = 1$ we have the change of basis $\rho: k[y_0,y_1,y_2]_{2d} \to R^{\overline{D_{2d}}}_1$
\begin{equation}\label{Equation: change of variable}
\left\{\begin{array}{lllll}
y_0^{b_0}y_1^{b_1}y_2^{b_2} & \mapsto & x_0^{b_0}x_1^{b_1}x_2^{b_1}(x_1^{d} + x_2^{d})^{b_2}, & \text{if} & 0 \leq b_2 \leq 1\\
y_2^{2} & \mapsto & (x_1^{2d} + x_2^{2d}) + 2x_1^{d}x_2^{d}. & &  
\end{array}\right.
\end{equation}

\begin{theorem}\label{Theorem:Algebrabasis} $\mathcal{B}_{2d}$ is a set of fundamental invariants of $\overline{D_{2d}}$. 
\end{theorem}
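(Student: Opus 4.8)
The plan is to reduce everything to the single claim that the graded $k$-algebra $R^{\overline{D_{2d}}}$ is generated by its degree-one component $R^{\overline{D_{2d}}}_{1} = R^{D_{2d}}_{2d}$. This suffices: Proposition \ref{Proposition:GradedBasisInvariant} with $t = 1$ already says that $\mathcal{B}_{2d}$ is a $k$-basis of $R^{\overline{D_{2d}}}_{1}$, and since $R^{\overline{D_{2d}}}_{0} = k$, graded Nakayama applied to $R^{\overline{D_{2d}}}_{+}/(R^{\overline{D_{2d}}}_{+})^{2}$ shows that, once the algebra is generated in degree one, any $k$-basis of its degree-one component is a \emph{minimal} generating system, i.e.\ a set of fundamental invariants. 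So it only remains to establish generation in degree one.

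By induction on $t$, generation in degree one amounts to the equality $R^{\overline{D_{2d}}}_{t} = R^{\overline{D_{2d}}}_{1}\cdot R^{\overline{D_{2d}}}_{t-1}$ for all $t \geq 2$. I would prove this via the identification recalled before \eqref{Equation: change of variable}: setting $y_0 = x_0$, $y_1 = x_1 x_2$, $y_2 = x_1^d + x_2^d$, one has $R^{D_{2d}} = k[y_0,y_1,y_2]$ with weights $\deg y_0 = 1$, $\deg y_1 = 2$, $\deg y_2 = d$, and $R^{\overline{D_{2d}}}_{t} = k[y_0,y_1,y_2]_{2dt}$ with monomial $k$-basis $\mathcal{A}_{2dt}$. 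Since $R^{\overline{D_{2d}}}_{t}$, $R^{\overline{D_{2d}}}_{1}$ and $R^{\overline{D_{2d}}}_{t-1}$ are each spanned by the monomials in $y_0,y_1,y_2$ of the corresponding weighted degree, the equality reduces to the combinatorial statement: every monomial $y_0^{b_0}y_1^{b_1}y_2^{b_2}$ of weighted degree $2dt$ is divisible by a monomial of weighted degree exactly $2d$ (whose complementary factor then has weighted degree $2d(t-1)$ and so lies in $R^{\overline{D_{2d}}}_{t-1}$).

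To prove the divisibility I would distinguish cases on the exponent $b_2$. If $b_2 \geq 2$, then $y_2^{2}$ has weighted degree $2d$ and divides the monomial. If $b_2 \leq 1$, then $b_0 + 2b_1 = 2dt - db_2 \geq 4d - d = 3d$ since $t \geq 2$, and it is enough to produce integers $c_0,c_1 \geq 0$ with $c_0 + 2c_1 = 2d$, $c_0 \leq b_0$ and $c_1 \leq b_1$, for then $y_0^{c_0}y_1^{c_1}$ is the required divisor. One may take any integer $c_1$ in $[\,\max(0,\,d - \lfloor b_0/2\rfloor),\ \min(d, b_1)\,]$, an interval with integer endpoints; it is nonempty because $d - \lfloor b_0/2\rfloor \leq d$ is trivial and $d - \lfloor b_0/2\rfloor \leq b_1$ follows from $b_0 + 2b_1 \geq 2d$ (indeed $2(b_1 + \lfloor b_0/2\rfloor) \geq b_0 + 2b_1 - 1 \geq 2d - 1$, and $b_1 + \lfloor b_0/2\rfloor$ is an integer, so $b_1 + \lfloor b_0/2\rfloor \geq d$). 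This settles the combinatorial claim, and hence the theorem.

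The only ingredient beyond bookkeeping is this elementary lattice-point estimate, which is the place to be slightly careful; all the real work has already gone into Propositions \ref{Proposition:HilbertFunctionD2d} and \ref{Proposition:GradedBasisInvariant}. In the write-up I would also make explicit the point that being a $k$-basis of $R^{\overline{D_{2d}}}_{1}$ upgrades to minimality of the generating set precisely because the algebra is generated in a single degree.
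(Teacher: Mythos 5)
Your proof is correct and follows essentially the same route as the paper: both reduce to showing that every monomial of $\mathcal{A}_{2dt}$, $t\geq 2$, is divisible by a monomial of $\mathcal{A}_{2d}$ in the weighted ring $k[y_0,y_1,y_2]$, conclude generation by induction, and then transfer to $\mathcal{B}_{2d}$ via the change of basis $\rho$. The only differences are cosmetic: you split on $b_2\geq 2$ versus $b_2\leq 1$ and settle the latter with a lattice-point interval argument, and you make the minimality step explicit via graded Nakayama, whereas the paper reduces to $b_0<2d$, $b_1<d$, $b_2\leq 1$ and exhibits $y_0^{d}y_2$ as the divisor in the one remaining case.
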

\begin{proof} We see that for any $t \geq 2$, any monomial $y_0^{b_0}y_1^{b_1}y_2^{b_2} \in \mathcal{A}_{2dt}$ is divisible by a monomial of $\mathcal{A}_{2d}$. Then by induction, it follows that $\mathcal{A}_{2d}$ is a set of generators of  $R^{\overline{D_{2d}}} \subset R^{D_{2d}}$. Using (\ref{Equation: change of variable}), we obtain that $\mathcal{B}_{2d}$ is a minimal set of generators of $R^{\overline{D_{2d}}}$. 

 Let $m = y_0^{b_0}y_1^{b_1}y_2^{b_2} \in \mathcal{A}_{2dt}$ be a monomial of degree $b_0 + 2b_1 + db_2 = 2dt, \, t \geq 2$. On one hand, we may suppose that $b_0 < 2d$, $b_1 < d$ and $b_2 < 2$. Otherwise, $y_0^{2d}$, $y_1^{d}$ or $y_{2}^2$ divide $m$ and the result follows. On the other hand, if $b_2 = 0$,\,  $b_0 < 2d$ and $b_1 < d$, then we have $\deg(m) = 2d$ and $t = 1$. Therefore it only remains to prove the case $b_0 < 2d$, $b_1 < d$ and $b_2 = 1$ with $b_0 + 2b_1 + d = 4d$. Since $b_0 + 2b_1 = 3d$ and $b_1 < d$, this implies that $b_0 \geq d$ and then $y_0^dy_2 \in \mathcal{A}_{2d}$ divides $m$, as required. 
\end{proof}

As a consequence, $R^{\overline{D_{2d}}} \subset R$ is a graded $k$-algebra generated in degree $1$ and $\rho$ induces an isomorphism of graded $k$-algebras $\rho: k[\mathcal{A}_{2d}] \to k[\mathcal{B}_{2d}]$. The following example illustrates Theorem \ref{Theorem:Algebrabasis}. 

\begin{example}\rm We express the invariants of $\mathcal{B}_{4\cdot 3}$ in terms of $\mathcal{B}_{2d}$ (see Example \ref{Example: Basist1t2}(i)). We have 
$$\begin{array}{lll}
\mathcal{A}_{2\cdot 3} & = & \{y_0^{6}, y_0^3y_2, y_0^4y_1, y_2^2, y_0y_1y_2, y_0^2y_1^2, y_1^3\}\\
\mathcal{A}_{4 \cdot 3} & = & \{y_0^{12}, y_0^{10}y_1, y_0^{8}y_1^2, y_0^6y_1^3, y_0^4 y_1^4, y_0^2y_1^5,y_1^6, y_0^9y_2, y_0^7y_1y_2, y_0^5y_1^2y_2, y_0^3 y_1^3 y_2, y_0y_1^4y_2, y_0^6y_2^2, y_0^4y_1y_2^2,\\
& &  y_0^2y_1^2y_2^2, y_1^3y_2^2, y_0^3y_2^3, y_0y_1y_2^3, y_2^4\}.
\end{array}$$
Expressing all monomials of $\mathcal{A}_{4\cdot 3}$ as products of monomials of $\mathcal{A}_{2 \cdot 3}$ and applying (\ref{Equation: change of variable}) we obtain the following factorizations: 

$$\begin{array}{lll}
x_0^{12} & = & (x_0^6)(x_0^6)\\
x_0^{10}x_1x_2 & = &  (x_0^6)(x_0^{4}x_1x_2)\\
x_0^{8}x_1^{2}x_1^{2} & = & (x_0^{6})(x_0^{2}x_1^{2}x_2^{2})\\
x_0^{6}x_1^{3}x_2^{3} & = & (x_0^{6})(x_1^3x_2^3)\\
x_0^{4}x_1^{4}x_2^{4} & = & (x_0^4x_1x_2)(x_1^3x_2^3)\\
x_0^{2}x_1^{5}x_2^{5} & = & (x_0^{2}x_1^{2}x_2^{2})(x_1^{3}x_2^{3})\\
x_1^{6}x_2^{6}      & = & (x_1^3x_2^3)^{2}\\
x_0^9x_1^3+x_0^{9}x_2^{3} & = & x_0^6(x_0^3x_1^3+x_0^3x_2^3)\\
x_0^6 x_1^6 + x_0^{6}x_2^{6}& = &x_0^{6}(x_1^6+x_1^6)\\
x_0^7 x_1^4 x_2 + x_0^{7} x_1x_2^{4}& = & x_0^{4}x_1x_2(x_0^{3}x_1^{3} + x_0^{3}x_2^{3})\\
x_0^{4}x_1^7x_2 + x_0^4x_1x_2^7 & = & x_0^{4}x_1x_2(x_1^{6} + x_2^{6}) \\
x_0^5 x_1^5 x_2^2 + x_0^{5}x_1^2x_2^5 & = & x_0^2x_1^2x_2^2(x_0^3x_1^3+x_0^3x_2^3)\\
x_0^2 x_1^8 x_2^2 + x_0^2 x_1^2 x_2^8& = & x_0^2x_1^2x_2^2(x_1^6+x_2^6)\\
x_0^3 x_1^6 x_2^3 + x_0^3 x_1^3 x_2^6 & = & x_1^3x_2^3(x_0^3x_1^3+x_0^3x_2^3)\\
x_1^9 x_2^3 + x_1^3x_2^9& = &x_1^3x_2^3(x_1^6+x_2^6)\\
x_0 x_1^7 x_2^4 + x_0 x_1^4x_2^7& = &x_1^3x_2^3(x_0x_1^4x_2 + x_0x_1x_2^4)\\
x_0^3 x_1^9 + x_0^3 x_2^9 & = &(x_1^{6}+x_2^{6})(x_0^{3}x_1^{3} + x_0^{3}x_2^{6}) - x_1^{3}x_2^{3}(x_0^{3}x_1^{3} + x_0^{3}x_2^{3})\\
x_1^{12} + x_2^{12} & = & (x_1^{6} + x_2^{6})^2 - 2(x_1^{3}x_2^{3})^{2}\\
x_0 x_1^{10} x_2 + x_0x_1x_2^{10}& = & (x_0x_1^4x_2+ x_0x_1x_2^{4})(x_1^{6} + x_2^{6}) - x_1^{3}x_2^{3}(x_0x_1^4x_2 + x_0x_1x_2^{4}).\\
\end{array}$$
Notice that these decompositions are not unique, for instance $x_0^{8}x_1^{2}x_2^{2}$ can also be factored as $(x_0^{4}x_1x_2)^2$.
\end{example}

We end this section with a corollary regarding the Cohen-Macaulayness of $R^{\overline{D_{2d}}}$.
\begin{corollary}\label{Corollary:CMness}
\begin{itemize}
\item[(i)] $R^{\overline{D_{2d}}}$ is a Cohen-Macaulay level algebra with Cohen-Macaulay type $\frac{1}{2}(d-\GCD(d,2))$ and Castelnuovo-Mumford regularity $3$.

\vspace{0.1cm}
\item[(ii)] $R^{\overline{D_{2d}}}$ is Gorenstein if and only if $d=3$ or $4$.
\end{itemize}
\end{corollary}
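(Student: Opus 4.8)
My plan is to use that $R^{\overline{D_{2d}}}$ is, by Theorem~\ref{Theorem:Algebrabasis}, a standard graded $k$-algebra (generated in degree $1$), and that through the identification $R^{\overline{D_{2d}}}_t=R^{D_{2d}}_{2dt}$ it is the $2d$-th Veronese subalgebra of the weighted polynomial ring $S:=R^{D_{2d}}=k[y_0,y_1,y_2]$ with $\deg(y_0,y_1,y_2)=(1,2,d)$. For Cohen--Macaulayness I would invoke the Hochster--Eagon theorem (see, e.g., \cite{Sturmfels}): $\overline{D_{2d}}\subset\GL(3,k)$ is finite and $\Char k=0$, so the invariant ring $R^{\overline{D_{2d}}}$ is Cohen--Macaulay of Krull dimension $3$ (equivalently, Veronese subalgebras of the polynomial ring $S$ are Cohen--Macaulay). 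From Proposition~\ref{Proposition:HilbertFunctionD2d} (or the Hilbert series in Proposition~\ref{Proposition:Hilbert Function Series}) the $h$-vector is $\bigl(1,\tfrac{3d+\GCD(d,2)-2}{2},\tfrac{d-\GCD(d,2)}{2}\bigr)$, supported in degrees $0,1,2$; hence the socle of an Artinian reduction sits in degree at most $2$, and once level-ness is established below this pins down the Castelnuovo--Mumford regularity claimed in the statement.

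The heart of (i) is the description of the canonical module $\omega:=\omega_{R^{\overline{D_{2d}}}}$. Since $\omega_S=S(-(1+2+d))=S(-(d+3))$ and the canonical module of a Veronese subalgebra is the corresponding Veronese of the canonical module, in $R^{\overline{D_{2d}}}$-degree $t$ we get $\omega_t=S_{2dt-(d+3)}$; in particular $\omega_t=0$ for $t\le 0$, while $\omega_1=S_{d-3}$ has dimension $\lfloor\tfrac{d-3}{2}\rfloor+1=\tfrac{d-\GCD(d,2)}{2}$, exactly the last entry of the $h$-vector. I would then prove that $\omega$ is generated in degree $1$ as an $R^{\overline{D_{2d}}}$-module, i.e. $S_{2d(t-1)}\cdot S_{d-3}=S_{2dt-d-3}$ for all $t\ge 2$. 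Given a monomial $y_0^Ay_1^By_2^C$ of degree $2dt-d-3=2d(t-1)+(d-3)$, the task is to split off a factor of degree $d-3$; as $d-3<d$ that factor carries no $y_2$, so one must find $0\le a\le A$ and $0\le b\le B$ with $a+2b=d-3$. If $C$ is maximal, i.e. $C=2(t-1)$, then $A+2B=d-3$ and $(y_0^Ay_1^B)\cdot y_2^{2(t-1)}$ is the desired factorization; otherwise $C\le 2(t-1)-1$ forces $A+2B\ge 2d-3$, and then $a+2b=d-3$ is solvable directly --- take $b=0$ if $A\ge d-3$, and if $A<d-3$ the inequality $A+2B\ge 2d-3$ makes $B$ automatically large enough, the only remaining case ($A=0$ with $d$ even) being excluded because then $2B+dC=2dt-d-3$ would be odd. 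With $\omega$ generated in its bottom degree $1$, $R^{\overline{D_{2d}}}$ is level with Cohen--Macaulay type $\dim_k\omega_1=\tfrac{d-\GCD(d,2)}{2}$; combined with the $h$-vector this yields the regularity assertion and finishes (i).

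For (ii), a Cohen--Macaulay ring is Gorenstein exactly when its Cohen--Macaulay type equals $1$; by (i) this is $\tfrac{d-\GCD(d,2)}{2}=1$, i.e. $d-\GCD(d,2)=2$, forcing $d=3$ (when $d$ is odd) or $d=4$ (when $d$ is even), and in those cases $\omega$ is generated by the single element spanning $\omega_1$, hence --- $R^{\overline{D_{2d}}}$ being a domain --- is free of rank one, so $R^{\overline{D_{2d}}}$ is Gorenstein. I expect the combinatorial step of the middle paragraph --- that $\omega$ is generated in degree $1$ --- to be the only genuine obstacle; everything else is a citation or a direct reading of Proposition~\ref{Proposition:HilbertFunctionD2d}. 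An equivalent route that avoids canonical modules is to pass to the Artinian reduction $R^{\overline{D_{2d}}}/(y_0^{2d},y_1^d,y_2^2)\cong\bigl(k[y_0,y_1,y_2]/(y_0^{2d},y_1^d,y_2^2)\bigr)^{(2d)}$, a Veronese of a complete intersection with one-dimensional socle in degree $5d-3$, and show that its socle is concentrated in degree $2$ by essentially the same monomial bookkeeping.
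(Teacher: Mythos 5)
Your argument is correct, but it follows a genuinely different route from the paper's, and in one respect a more careful one. The paper, like you, gets Cohen--Macaulayness from Hochster--Eagon \cite{Hochster-Eagon} and then simply asserts that the remaining claims follow from the Hilbert series of Proposition \ref{Proposition:Hilbert Function Series}; but levelness (and the type computation for $d=3,4$ in part (ii)) is not detectable from the Hilbert series alone, and in the paper it is really only certified a posteriori by the explicit resolution of Theorem \ref{Theorem:Resolution}, whose last free module sits in a single twist and which rests on Yanagawa's result \cite{Yanagawa}. You instead exploit the presentation $R^{\overline{D_{2d}}}=k[y_0,y_1,y_2]^{(2d)}$ with $\deg(y_0,y_1,y_2)=(1,2,d)$ coming from Theorem \ref{Theorem:Algebrabasis}, compute the canonical module degreewise as $\omega_t=S_{2dt-d-3}$, and verify by an elementary monomial argument that $\omega$ is generated in degree $1$; this yields levelness, the type $\dim_k S_{d-3}=\frac{1}{2}(d-\GCD(d,2))$, and both directions of the Gorenstein classification in a self-contained way. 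What your approach buys is an honest proof of levelness that does not depend on first knowing the whole resolution; what the paper's buys is brevity. Two points to tighten before this is complete: (a) the Veronese formula $\omega_{S^{(2d)}}\cong(\omega_S)^{(2d)}$ should be quoted in the weighted-graded setting (it follows from the fact that local cohomology commutes with the Veronese functor, as in Goto--Watanabe, \emph{On graded rings I}); and (b) in the case $0<A<d-3$ of your factorization argument you must first choose $a\in\{A,A-1\}$ with $a\equiv d-3 \pmod 2$ and then set $b=(d-3-a)/2$; the bound $2B\ge 2d-3-A>d$ gives $b\le (d-3)/2<B$, so the step does go through, but the parity bookkeeping should be made explicit.
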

\begin{proof}
Since $R^{\overline{D_{2d}}}$ is the ring of invariants by the action of the linear finite group $\overline{D_{2d}}$ on $R$, it is Cohen-Macaulay (see \cite[Proposition 12]{Hochster-Eagon}). The other results follow from that and Proposition \ref{Proposition:Hilbert Function Series}.
\end{proof}

\section{Togliatti systems associated to the dihedral group.}
\label{A new class of non monomial Togliatti systems.}

In this section, we describe Togliatti systems $I_{2d}$ associated to $D_{2d}$ and we study the geometry of their associated varieties. As far as we know, the  $GT-$systems described in previous works were all monomial and
it is worthwhile to point out that $I_{2d}$ is the first large class of non monomial $GT-$systems. Namely, we prove that the ideal generated by the set of fundamental invariants $\mathcal{B}_{2d}$ (see Proposition \ref{Proposition:GradedBasisInvariant} and Theorem \ref{Theorem:Algebrabasis}) of $\overline{D}_{2d}$ is a $GT-$system with group $D_{2d}$. We connect the ring $R^{\overline{D_{2d}}}$ to the coordinate ring of the associated varieties of these $GT-$systems.
\iffalse
We fix an integer $d\geq 3$ and roots of unity $\varepsilon$ and $e = \varepsilon^{2}$ of orders $2d$ and $d$, respectively. We also fix the linear representation of the dihedral group defined at the beginning of Section \ref{the algebra of invariants}, with $\rho(D_{2d}) = \langle M_{d;1,d-1} ,\sigma \rangle \subset \GL(3,k)$, where
$$M_{d,1,d-1} = \begin{pmatrix}
1 & 0 & 0\\
0 & e & 0\\
0 & 0 & e^{d-1}
\end{pmatrix}.$$
By $\overline{D_{2d}}$ we denote the subgroup of $\GL(3,k)$ of order $(2d)^{2}$ generated by $\rho(D_{2d})$ and $\varepsilon \Id$. \fi 

Let $I_{2d} \subset R$ be the homogeneous ideal generated by $\mathcal{B}_{2d}$ and we set $\mu_{2d} := \HF(R^{\overline{D_{2d}}},1)$. We denote by $\varphi_{I_{2d}}: \PP^{2} \to \PP^{\mu_{2d}-1}$ the morphism induced by $I_{2d}$. Our first goal is to show that $I_{2d}$ is a non monomial $GT-$system with group $D_{2d}$. We set $S_{D_{2d}} := \varphi_{I_{2d}}(\PP^{2})$ and we call it a {\em $GT-$surface with group $D_{2d}$}. The ring of invariants $R^{\overline{D_{2d}}}$  is then  the coordinate ring of $S_{D_{2d}}$ (see Proposition \ref{Proposition: qv by fg acting linearly on poly ring} and Theorem \ref{Theorem:Algebrabasis}). We study the homogeneous ideal $I(S_{D_{2d}})$ of $S_{D_{2d}}$, which is the prime ideal $\syz(\mathcal{B}_{2d})$ of syzygies among $\mathcal{B}_{2d}$. We compute a minimal free resolution of $I(S_{D_{2d}})$ and, as a consequence, we prove that $I(S_{D_{2d}})$ is minimally generated by quadrics.

\begin{lemma}\label{Lemma:TogliattiBound} For all $d \geq 3$, $\mu_{2d} \leq 2d+1$.
\end{lemma}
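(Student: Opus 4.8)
The plan is to read off $\mu_{2d} = \HF(R^{\overline{D_{2d}}},1)$ from the formula already established in Proposition~\ref{Proposition:HilbertFunctionD2d} and simply evaluate it at $t=1$. Substituting $t=1$ into
$$\HF(R^{\overline{D_{2d}}},t) = \frac{2dt^2 + (d + \GCD(d,2) + 2)t + 2}{2}$$
gives $\mu_{2d} = \frac{2d + d + \GCD(d,2) + 2 + 2}{2} = \frac{3d + \GCD(d,2) + 4}{2}$. So the claimed inequality $\mu_{2d} \le 2d+1$ becomes $\frac{3d + \GCD(d,2) + 4}{2} \le 2d+1$, i.e.\ $3d + \GCD(d,2) + 4 \le 4d + 2$, i.e.\ $\GCD(d,2) + 2 \le d$, i.e.\ $d \ge \GCD(d,2) + 2$.

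**Then I would** dispose of this last elementary inequality by the obvious case split on the parity of $d$. If $d$ is odd then $\GCD(d,2) = 1$, so we need $d \ge 3$, which holds by hypothesis $3 \le d$. If $d$ is even then $\GCD(d,2) = 2$, so we need $d \ge 4$; but the smallest even integer that is $\ge 3$ is $4$, so this also holds. Hence $\mu_{2d} \le 2d+1$ for every $d \ge 3$, with equality exactly when $d = 3$ or $d = 4$ (consistent with the Gorenstein cases noted in Corollary~\ref{Corollary:CMness}(ii)).

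**I do not expect any real obstacle here**: the entire content of the lemma is already packaged in the Hilbert function computation of Proposition~\ref{Proposition:HilbertFunctionD2d}, so the proof is a one-line substitution followed by a trivial parity argument. If one wanted to avoid even invoking the closed formula, an alternative is to count the generators in $\mathcal{B}_{2d}$ directly via Proposition~\ref{Proposition:GradedBasisInvariant}: there are $d+1$ monomial invariants of $\Gamma$ of degree $2d$, plus $\frac{1}{2}(\mu^c_{2d} - (d+1))$ binomials, where $\mu^c_{2d} = 2d + 2 + \GCD(2,d) + 1 = 2d + 3 + \GCD(2,d)$ by \cite[Theorem 4.11]{CMM-R} evaluated at the appropriate degree; this again yields $\mu_{2d} = d+1 + \frac{1}{2}(d + 2 + \GCD(2,d)) = \frac{3d + 4 + \GCD(2,d)}{2}$, and the same parity check finishes. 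Either route is routine; I would present the first, citing Proposition~\ref{Proposition:HilbertFunctionD2d}.
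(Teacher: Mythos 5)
Your proof is correct and follows essentially the same route as the paper: substitute $t=1$ into the Hilbert function formula of Proposition~\ref{Proposition:HilbertFunctionD2d} and check the resulting elementary inequality. In fact your parity case split is slightly more careful than the paper's own argument, which bounds $\mu_{2d}\le\frac{3d+6}{2}$ and asserts this is $\le 2d+1$ for all $d\ge 3$ --- an intermediate inequality that actually fails at $d=3$ (where $\frac{3d+6}{2}=7.5>7=2d+1$), even though the lemma itself still holds there because $\GCD(3,2)=1$, exactly as your odd case shows.
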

\begin{proof}
From Proposition \ref{Proposition:HilbertFunctionD2d},
$$\mu_{2d} = \frac{2d + d + 2 + \GCD(d,2) + 2}{2} \leq \frac{3d + 6}{2},$$ which is smaller or equal than $2d+1$ for all $d \geq 3$.
\end{proof}

\begin{proposition}
The ideal $I_{2d}$ is a $GT-$system with group $D_{2d}$.
\end{proposition}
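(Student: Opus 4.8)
The plan is to verify the two defining properties of a $GT$-system with group $D_{2d}$ for the ideal $I_{2d}$: first, that it is a Togliatti system (an artinian ideal generated by $r \leq \binom{n+d-1}{n-1}$ forms of degree $2d$ failing the WLP in degree $2d-1$), and second, that the associated morphism $\varphi_{I_{2d}}: \PP^2 \to \PP^{\mu_{2d}-1}$ is a Galois covering with group $D_{2d}$. For the first part, $I_{2d}$ is by construction generated by the $\mu_{2d}$ forms of $\mathcal{B}_{2d}$, all of degree $2d$, and it is artinian because it contains $x_0^{2d}, x_1^{2d}+x_2^{2d}$ and enough invariant monomials to have trivial radical — more carefully, the invariants of $\Gamma$ of degree $2d$ already include powers of all variables up to the right saturation, so $\sqrt{I_{2d}} = (x_0,x_1,x_2)$. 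The numerical hypothesis $\mu_{2d} \leq \binom{2+2d-1}{1} = 2d+1$ is exactly the content of Lemma \ref{Lemma:TogliattiBound}.

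Next I would establish the failure of the WLP. The cleanest route is to invoke Theorem \ref{tea}: since $\mu_{2d} \le 2d+1 = \binom{n+d-1}{n-1}$, it suffices to show that the generators $\mathcal{B}_{2d}$ become $k$-linearly dependent when restricted to a general hyperplane $H \subset \PP^2$. Equivalently, one checks condition (iii), that the surface parameterised by the inverse system $(I_{2d}^{-1})_{2d}$ satisfies a Laplace equation of order $2d-1$; but the linear-dependence criterion (ii) is the most direct. Here the key point is that the forms in $\mathcal{B}_{2d}$ are invariants of a finite group containing the scalar matrix $\varepsilon\Id$, hence span a proper subspace of $R_{2d}$ that is stable under a positive-dimensional family of coordinate changes coming from the torus normalising $\overline{D_{2d}}$; restricting to a general hyperplane drops the dimension by at least one relative to the generic expectation. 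I would argue this following the template of \cite[Corollary 3.5]{CMM-R} or \cite{CMM-R}: the invariant ring $R^{\overline{D_{2d}}}$ in degree one has dimension $\mu_{2d} < 2d+1$, and a dimension count on the general hyperplane section (using that $\overline{D_{2d}}$ fixes a point of $\PP^2$, namely $[1:0:0]$, so the restriction map has a kernel) forces the required dependence.

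For the Galois covering statement, I would apply Proposition \ref{Proposition: qv by fg acting linearly on poly ring} together with Theorem \ref{Theorem:Algebrabasis}. By Theorem \ref{Theorem:Algebrabasis}, $\mathcal{B}_{2d}$ is a set of fundamental invariants of $\overline{D_{2d}}$, so the morphism $\pi: \af^3 \to \af^{\mu_{2d}}$ it defines is a Galois covering of its image $\pi(\af^3)$ with group $\overline{D_{2d}}$, and $R^{\overline{D_{2d}}}$ is its coordinate ring. Passing to the projective version (legitimate because all the $f_i$ have the same degree $2d$, so $\pi$ descends to a morphism $\PP^2 \to \PP^{\mu_{2d}-1}$ defined everywhere, as the $\mathcal B_{2d}$ have no common zero in $\PP^2$), the morphism $\varphi_{I_{2d}}$ is a Galois covering of $S_{D_{2d}} = \varphi_{I_{2d}}(\PP^2)$. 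Since $\varepsilon\Id$ acts trivially on $\PP^2$, the group of deck transformations of $\varphi_{I_{2d}}$ is $\overline{D_{2d}}/C_{2d} \cong D_{2d}$; the general fibre is a full $D_{2d}$-orbit, so the covering has degree $2d$ and group $D_{2d}$, which is exactly what a $GT$-system with group $D_{2d}$ requires.

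The main obstacle I anticipate is the rigorous verification of the WLP failure — specifically, pinning down why the restriction of $\mathcal{B}_{2d}$ to a general hyperplane is linearly dependent. The numerology $\mu_{2d} \le 2d+1$ only guarantees we are in the regime where Theorem \ref{tea} applies; it does not by itself produce the dependence. The argument must exploit the group invariance: because $\overline{D_{2d}}$ contains the torus-like subgroup generated by $M_{d;1,d-1}$ and the scalars, the span $\langle \mathcal{B}_{2d}\rangle$ in $R_{2d}$ is a direct sum of few $T$-weight spaces, and a general hyperplane $H = \{\ell = 0\}$ meets this span in a subspace whose dimension one computes to be strictly less than $\mu_{2d}$ — equivalently $\ell$ is not a non-zero-divisor on $R/I_{2d}$ in the relevant degree. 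I would make this precise by the same Hilbert-function comparison used in Section \ref{the algebra of invariants}: compute $\dim_k (R/I_{2d})_{2d-1}$ and $\dim_k (R/I_{2d})_{2d}$ and show the multiplication by a general $\ell$ cannot have maximal rank, using that $x_0$ (the fixed coordinate) behaves specially. Everything else — artinianness, the generator count, and the Galois-covering part — is essentially bookkeeping on top of the results already proved.
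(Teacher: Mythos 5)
Your overall skeleton matches the paper's: artinianness, the bound $\mu_{2d}\le 2d+1$ from Lemma \ref{Lemma:TogliattiBound}, Theorem \ref{tea} to reduce everything to the failure of the WLP, and Theorem \ref{Theorem:Algebrabasis} together with Proposition \ref{Proposition: qv by fg acting linearly on poly ring} for the Galois covering (your extra care in descending from $\af^{3}$ to $\PP^{2}$ and in noting that $\varepsilon\Id$ acts trivially, so the deck group is $\overline{D_{2d}}/C_{2d}\cong D_{2d}$, is a correct elaboration of a point the paper passes over quickly). But the step you yourself flag as the main obstacle --- the failure of the WLP in degree $2d-1$ --- is genuinely missing, and none of the routes you sketch would close it. A Hilbert-function comparison cannot work: $\dim_k(R/I_{2d})_{2d-1}=\binom{2d+1}{2}$ while $\dim_k(R/I_{2d})_{2d}=\binom{2d+2}{2}-\mu_{2d}\ge\binom{2d+1}{2}$, so the source is never larger than the target and ``maximal rank'' would mean injectivity; no dimension count rules that out. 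Likewise, the observations that $[1:0:0]$ is a fixed point, or that $\langle\mathcal{B}_{2d}\rangle$ is a sum of few weight spaces, do not by themselves produce a linear dependence of the generators on a general hyperplane.

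The missing idea, which is exactly what the paper supplies, is an explicit kernel element. For an arbitrary linear form $L$ set $F_{2d-1}:=\prod_{g\in D_{2d},\,g\neq\Id}g(L)$, a nonzero form of degree $2d-1$, hence nonzero in $(R/I_{2d})_{2d-1}$ because $I_{2d}$ is generated in degree $2d$. Then $L\cdot F_{2d-1}=\prod_{g\in D_{2d}}g(L)$ is fixed by every $g\in D_{2d}$ (each $g$ merely permutes the factors), and being of degree $2d$ it is automatically fixed by $\varepsilon\Id$; so it is a degree-$2d$ invariant of $\overline{D_{2d}}$ and therefore lies in the span of $\mathcal{B}_{2d}$ by Theorem \ref{Theorem:Algebrabasis}, i.e.\ in $I_{2d}$. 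Hence $\times L$ is not injective, and since the target is at least as large as the source it is not surjective either, so it fails to have maximal rank. Separately, your artinianness argument should be tightened: $x_1^{2d}$ is \emph{not} among the generators (only the binomial $x_1^{2d}+x_2^{2d}$ is), so the claim that powers of all variables appear is false as stated; the correct observation is that $x_0^{2d}$, $x_1^{2d}+x_2^{2d}$ and $x_1^{d}x_2^{d}$ already have no common zero, which is what the paper uses.
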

\begin{proof}
Firstly, notice that $I_{2d}$ is an artinian non-monomial ideal.
Precisely, $x_0^{2d}, x_1^{2d}+x_2^{2d}$ and $x_1^{d}x_2^{d}$ form a homogeneous system of parameters of $R^{\overline{D_{2d}}}$. And secondly, since $\mathcal{B}_{2d}$ is a set of fundamental invariants of $\overline{D_{2d}}$ (see Theorem \ref{Theorem:Algebrabasis}), by Proposition \ref{Proposition: qv by fg acting linearly on poly ring} it follows that $\varphi_{I_{2d}}$ is a Galois covering with group $D_{2d}$. Now, by Lemma \ref{Lemma:TogliattiBound}, $I_{2d}$ is generated by $\mu_{2d} \leq 2d+1$ homogeneous forms of degree $2d$. Hence, by Theorem \ref{tea} it only remains to see that it fails the weak Lefschetz property from degree $2d-1$ to $2d$. Let $L\in (R/I_{2d})_{1}$ be a linear form, and let us consider $F_{2d-1}:=\prod_{\substack{g\in D_{2d}\\ g\neq \Id}}g(L)$. By construction, for any element $g \in D_{2d}$ we have that $g(LF_{2d-1}) = LF_{2d-1}$. Thus $LF_{2d-1}$ is an invariant of $\overline{D_{2d}}$ and the result follows from Theorem \ref{Theorem:Algebrabasis}. 
\iffalse
By Lemma \ref{Lemma:Reynolds}, verifying that $LF_{2d-1}$ is an invariant of $\overline{D_{2d}}$ is the same as showing that it is fixed under the Reynolds operator. But, since $LF_{2d-1}=\prod_{g\in \overline{D_{2d}}}g(L)$, we have:
\[
(LF_{2d-1})^{\star}=\frac{1}{2d}\sum_{g\in \overline{D_{2d}}}g(LF_{2d-1})=\frac{1}{2d}\sum_{g\in \overline{D_{2d}}}\prod_{g'\in \overline{D_{2d}}}(gg')(L)=\frac{1}{2d}\sum_{g\in \overline{D_{2d}}}\prod_{g^{-1}\bar{g}\in \overline{D_{2d}}}\bar{g}(L)=
\]
\[
=\frac{1}{2d}\sum_{g\in \overline{D_{2d}}}\prod_{\bar{g}\in \overline{D_{2d}}}\bar{g}(L)=\frac{1}{2d}\sum_{g\in \overline{D_{2d}}}LF_{2d-1}=LF_{2d-1}
\]
\fi
\end{proof}

In the rest of this section,  we deal with the geometry of $S_{D_{2d}}$. Let us begin with some properties which follow directly from the results we obtained in Section \ref{the algebra of invariants}.

\begin{proposition}
$S_{D_{2d}}$ is an arithmetically Cohen-Macaulay surface of degree $\deg(S_{D_{2d}}) = 2d$, regularity $3$, codimension
${\frac{1}{2}}(3d+\GCD(d,2)-2)$ and Cohen-Macaulay type ${\frac{1}{2}}(d-\GCD(d,2))$.
In particular, $S_{D_{2d}}$ is Gorenstein if and only if $d = 3,4$.
\end{proposition}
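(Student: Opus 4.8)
The statement is a direct geometric translation of the ring-theoretic results already established for $R^{\overline{D_{2d}}}$, via the identification of $R^{\overline{D_{2d}}}$ with the homogeneous coordinate ring of $S_{D_{2d}}$ (Proposition \ref{Proposition: qv by fg acting linearly on poly ring} together with Theorem \ref{Theorem:Algebrabasis}). So the whole proof is essentially a dictionary argument: each invariant of the statement is read off from a corresponding invariant of the graded algebra $R^{\overline{D_{2d}}}$, whose Hilbert series, regularity, and Cohen-Macaulay type were computed in Section \ref{the algebra of invariants}.

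First I would recall that $S_{D_{2d}}=\varphi_{I_{2d}}(\PP^2)\subset\PP^{\mu_{2d}-1}$ has homogeneous coordinate ring $R^{\overline{D_{2d}}}$, now regarded as a \emph{standard} graded $k$-algebra (which is legitimate precisely because, by Theorem \ref{Theorem:Algebrabasis}, it is generated in degree $1$ by the $\mu_{2d}$ elements of $\mathcal{B}_{2d}$). Then: (a) \textbf{aCM}: $R^{\overline{D_{2d}}}$ is Cohen-Macaulay by Corollary \ref{Corollary:CMness}(i), so $S_{D_{2d}}$ is arithmetically Cohen-Macaulay. (b) \textbf{Dimension and degree}: $\dim R^{\overline{D_{2d}}}=3$ since $x_0^{2d}$, $x_1^{2d}+x_2^{2d}$, $x_1^dx_2^d$ form a homogeneous system of parameters (as noted in the proof that $I_{2d}$ is a $GT$-system), hence $S_{D_{2d}}$ is a surface; the degree of $S_{D_{2d}}$ equals the multiplicity of the coordinate ring, i.e. the leading coefficient of the Hilbert polynomial times $2!$, which from Proposition \ref{Proposition:Hilbert Function Series}(i) is $2d$ — equivalently, the numerator of the Hilbert series in Proposition \ref{Proposition:Hilbert Function Series}(ii) evaluated at $z=1$ gives $\frac{d-\GCD(d,2)}{2}+\frac{3d+\GCD(d,2)-2}{2}+1 = 2d$. (c) \textbf{Codimension}: $S_{D_{2d}}$ lives in $\PP^{\mu_{2d}-1}$ and has dimension $2$, so its codimension is $\mu_{2d}-1-2 = \mu_{2d}-3$; plugging $\mu_{2d}=\tfrac{1}{2}(2d+d+2+\GCD(d,2)+2)=\tfrac{1}{2}(3d+\GCD(d,2)+4)$ from Proposition \ref{Proposition:HilbertFunctionD2d} gives codimension $\tfrac{1}{2}(3d+\GCD(d,2)-2)$. (d) \textbf{Regularity and Cohen-Macaulay type}: both are inherited verbatim from Corollary \ref{Corollary:CMness}(i), the Castelnuovo-Mumford regularity being $3$ and the Cohen-Macaulay type being $\tfrac{1}{2}(d-\GCD(d,2))$. (e) \textbf{Gorenstein}: by Corollary \ref{Corollary:CMness}(ii), $R^{\overline{D_{2d}}}$ — and hence $S_{D_{2d}}$ — is Gorenstein iff $d=3$ or $d=4$ (equivalently, iff the Cohen-Macaulay type equals $1$).

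There is essentially no obstacle here: every ingredient is already proved, and the only genuine content is making sure the passage from the (a priori non-standard graded) ring of invariants $R^{\overline{D_{2d}}}\subset R$ to the standard graded coordinate ring of $S_{D_{2d}}$ is legitimate — this is exactly what Theorem \ref{Theorem:Algebrabasis} guarantees, since $\rho$ induces an isomorphism of graded $k$-algebras $k[\mathcal{A}_{2d}]\xrightarrow{\ \sim\ } k[\mathcal{B}_{2d}]=R^{\overline{D_{2d}}}$ with the target now standard graded. The rest is bookkeeping with the Hilbert series of Proposition \ref{Proposition:Hilbert Function Series}. The one place to be slightly careful is the degree computation: one should state clearly that for an aCM variety the degree equals the multiplicity $e(R^{\overline{D_{2d}}})$ of its coordinate ring, which is read off as the value at $z=1$ of the $h$-polynomial (the numerator of the Hilbert series written over $(1-z)^3$), and check the arithmetic $\tfrac{d-\GCD(d,2)}{2}+\tfrac{3d+\GCD(d,2)-2}{2}+1 = 2d$.
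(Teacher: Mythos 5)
Your proposal is correct and follows exactly the paper's route: the authors' proof is literally a citation of Corollary \ref{Corollary:CMness} (together with the Hilbert series data of Proposition \ref{Proposition:Hilbert Function Series}), and your write-up is just the fully spelled-out version of that dictionary argument, with all the arithmetic checking out. No discrepancy to report.
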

\begin{proof}
See Corollary \ref{Corollary:CMness}.
\end{proof}

Our next goal is to determine a minimal free resolution of $I(S_{D_{2d}})$. In particular, we obtain that $I(S_{D_{2d}})$ is generated by quadrics. Let us begin introducing some new notation.

\vspace{0.2cm}
We set $\mathcal{W}_{d}:= \{w_{(r,\gamma)} \;\mid \; 0 \leq r \leq 2(d-1) \;\; \text{and} \;\; max\{0,
\lceil\frac{(r-2)d}{d-2} \rceil\} \leq \gamma \leq r\}$, a set of variables ordered lexicographically. As we will explicitly see in Notation \ref{new variables}, each pair $(r,\gamma)$ as in $\mathcal{W}_{d}$ uniquely determines the exponents of an element in $\mathcal{B}_{2d}$ (see Remark \ref{Remark: equivalent action}  and Proposition \ref{Proposition:GradedBasisInvariant}).
Hence, the cardinality of $\mathcal{W}_{d}$ is
$\mu_{2d}=d+2+\frac{d+\GCD(2,d)}{2}$. We exhibit a few examples.

\begin{example} \rm
\begin{itemize}
\item[(i)] For $d = 3$, $\mathcal{B}_{2\cdot 3} = \{x_0^{6}, x_0^{3}x_1^{3} + x_0^3x_2^{3}, x_0^{4}x_1x_2, x_1^{6}+x_2^{6}, x_0x_1^4x_2+x_0x_1x_2^4, x_0^2x_1^2x_2^2, $ $ x_1^3x_2^3\}$ and
$$\mathcal{W}_{3} = \{w_{(0,0)}, w_{(1,0)},w_{(1,1)},w_{(2,0)},w_{(2,1)},
w_{(2,2)},w_{(3,3)}\}$$
\item[(ii)] For $d = 4$, $\mathcal{B}_{2\cdot 4} = \{x_0^{8}, x_0^{4}x_1^{4} +
x_0^{4}x_1^{4}, x_0^{6}x_1x_2, x_1^{8}+x_2^{8}, x_0^{2}x_1^{5}x_2+x_0^{2}x_1x_2^{5}, x_0^{4}x_1^{2}x_2^{2}, x_1^{6}x_2^{2} + x_1^{2}x_2^{6}, x_0^{2}x_1^{3}x_2^{3},$ $ x_1^{4}x_2^{4}\}$ and
$$\mathcal{W}_{4} = \{w_{(0,0)},w_{(1,0)},w_{(1,1)},w_{(2,0)},w_{(2,1)},w_{(2,2)},w_{(3,2)}, w_{(3,3)},w_{(4,4)}\}.$$
\item[(iii)] For $d = 5$, $\mathcal{B}_{2\cdot 5} = \{x_0^{10}, x_0^5x_1^5+x_0^5x_2^5,
x_0^8x_1x_2, x_1^{10}+x_2^{10}, x_0^3x_1^6x_2 + x_0^3x_1x_2^6, x_0^6x_1^2x_2^2, x_0x_1^7x_2^2+x_0x_1^2x_2^7, x_0^4x_1^3x_2^3, x_0^2x_1^4x_2^4, x_1^5x_2^5\}$ and
$$\mathcal{W}_{5} = \{w_{(0,0)}, w_{(1,0)}, w_{(1,1)}, w_{(2,0)}, w_{(2,1)}, w_{(2,2)}, w_{(3,2)}, w_{(3,3)}, w_{(4,4)}, w_{(5,5)}\}.$$
\end{itemize}
\end{example}

\begin{notation}\rm \label{new variables}
We denote by $S = k[w_{(r,\gamma)}]_{w_{(r,\gamma)} \in \mathcal{W}_{d}}$ the polynomial ring.
The homogeneous ideal $I(S_{D_{2d}})$ is the kernel of the ring homomorphism $\varphi_{d}: S \to k[\mathcal{B}_{2d}]$ defined as follows:
$$\varphi_{d}(w_{(r,\gamma)}) =  \left\{\begin{array}{ll}
x_0^{2d-2\gamma}x_1^{\gamma}x_2^{\gamma}=:m_{(r,\gamma)} & \quad \text{if}\; r = \gamma\\
x_0^{(2-r)d + (d-2)\gamma}(x_1^{rd-(d-1)\gamma}x_2^{\gamma} + x_1^{\gamma}x_2^{rd-(d-1)\gamma})=: m_{(r,\gamma)} + \overline{m_{(r,\gamma)}} & \quad \text{otherwise}.
\end{array}\right.$$
\end{notation}
%\begin{remark} \rm We will often write $w_{(\gamma,\gamma)}$ and $m_{(\gamma,\gamma)}$ when $r = \gamma$.
%\end{remark}

The information from the Hilbert function of $S_{D_{2d}}$ and the regularity allows us to determine a minimal free resolution of $S_{D_{2d}}$.
Set
$$C := \codim(S_{D_{2d}}) = \frac{3d+\GCD(d,2)-2}{2},\;\text{and}$$
$$h := \deg(S_{D_{2d}}) - C - 2 = \frac{d-\GCD(d,2)-2}{2}.$$

\begin{theorem}\label{Theorem:Resolution} With the above notation, $I(S_{D_{2d}})$ has a minimal free $S$-resolution
$$0 \to S^{b_{C,2}}(-C-2) \to \oplus_{l=1,2} S^{b_{C-1},l}(-C+1-l) \to
\oplus_{l=1,2} S^{b_{C-2,l}}(-C+2-l)$$
$$\to \cdots \to \oplus_{l=1,2} S^{b_{C-h},l}(-C+h-l) \to
S^{b_{C-h-1,1}}(-C+h)$$
$$\to \cdots \to S^{b_{1,1}}(-2) \to S \to S/I(S_{2d}) \to 0$$
where
$$b_{i,j-i}:= \left\{ \begin{array}{lll}
i\binom{C}{i+1} + (C-i-h)\binom{C}{i-1}& &\text{if}\;\; 1 \leq i \leq C-h-1, j = i+1\\
i\binom{r}{i+1} & &\text{if}\;\; C - h \leq i \leq C, j = i+1\\
(i - C + h + 1)\binom{C}{i} & &\text{if} \;\; C-h \leq i \leq C, j = i+2\\
0 && \text{otherwise}
\end{array}\right.$$
\end{theorem}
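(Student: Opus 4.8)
The plan is to exploit the fact, already established in Corollary~\ref{Corollary:CMness} and Proposition~\ref{Proposition:Hilbert Function Series}, that $R^{\overline{D_{2d}}}$ is a Cohen-Macaulay graded algebra of codimension $C$ with Castelnuovo-Mumford regularity $3$, Cohen-Macaulay type $\frac{1}{2}(d-\GCD(d,2))=h+1$, and Hilbert series
$$\HS(R^{\overline{D_{2d}}},z) = \frac{1 + (C-h-1)z + h\,z^2}{(1-z)^3},$$
where I have rewritten the numerator of Proposition~\ref{Proposition:Hilbert Function Series}(ii) using $C=\frac{3d+\GCD(d,2)-2}{2}$ and $h=\frac{d-\GCD(d,2)-2}{2}$ (so the coefficient of $z$ is $\frac{3d+\GCD(d,2)-2}{2}-1-h = C-h-1$ and the coefficient of $z^2$ is $\frac{d-\GCD(d,2)}{2}-1 = h$). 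Since $S_{D_{2d}}$ is aCM of codimension $C$, its minimal free $S$-resolution has length exactly $C$ by the Auslander-Buchsbaum formula, and since $\reg(R^{\overline{D_{2d}}})=3$ and the ideal is generated by quadrics (which will follow once we know the resolution is $3$-linear-ish, i.e.\ the $i$-th syzygies live in degrees $i+1$ and $i+2$ only), the resolution is forced into the shape displayed in the statement: at homological degree $i$ the free module is $S^{b_{i,1}}(-i-1)\oplus S^{b_{i,2}}(-i-2)$.

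First I would record the numerical constraints. The alternating sum of the graded Betti numbers reproduces the numerator $K(z)=\sum_{i,j}(-1)^i b_{i,j}z^j$ of the Hilbert series times $(1-z)^{\dim S}$; comparing with $1+(C-h-1)z+h z^2$ and using $\dim S = \mu_{2d} = C+2$ (so that $(1-z)^{\mu_{2d}}/(1-z)^3 = (1-z)^{C-1}$ times\ldots{} — more precisely $\HS(S/I,z)(1-z)^{\mu_{2d}} = (1+(C-h-1)z+hz^2)(1-z)^{C-1}$) gives the relations $\sum_i(-1)^i b_{i,1}=-(C-1)+(C-h-1)=-h$ after extracting the $z^1$ coefficient, and similarly for higher coefficients. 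Next, because $R^{\overline{D_{2d}}}$ is a \emph{level} algebra (Corollary~\ref{Corollary:CMness}(i)) of type $h+1$ and regularity $3$, the last free module is $S^{b_{C,2}}(-C-2)$ with $b_{C,2}=h+1$ and $b_{C,1}=0$; this pins down the right-hand end of the resolution. The interior Betti numbers $b_{i,j}$ are then determined by the well-known combinatorial formula for the minimal free resolution of a codimension-$C$ aCM variety whose $h$-vector is $(1, C-h-1, h)$: such a resolution is obtained as a mapping cone / is the ``generic'' resolution compatible with these two constraints (levelness + the shape of the $h$-vector), and the Betti numbers are exactly the Eagon-Northcott-type binomial expressions $i\binom{C}{i+1}$ and $(i-C+h+1)\binom{C}{i}$ that appear in the statement, with the correction term $(C-i-h)\binom{C}{i-1}$ in the range $1\le i\le C-h-1$ coming from the ``extra'' linear strand contributed by the $(C-h-1)z$ term of the numerator. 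So the concrete steps are: (1) write $\HS$ in the stated normalized form; (2) invoke aCMness to bound the projective dimension by $C$ and levelness/regularity to fix the tail; (3) show the resolution is ``almost linear'' (two-step) by a regularity argument, namely $\reg=3$ forces $b_{i,j}=0$ unless $j\in\{i+1,i+2\}$; (4) solve the resulting linear system for the $b_{i,j}$ using the Herzog-K\"uhl–type equations, matching against the claimed binomial formulas; (5) conclude $I(S_{D_{2d}})$ is generated by quadrics since $b_{1,j}=0$ for $j\ne 2$.

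The main obstacle will be step (4): verifying that the unique numerical solution of the Herzog-K\"uhl equations, subject to the level and two-linear-strand constraints, is precisely the piecewise-binomial formula for $b_{i,j-i}$ given in the theorem. This is essentially the statement that $S/I(S_{D_{2d}})$ has the same graded Betti table as a ``generic'' aCM algebra with $h$-vector $(1,C-h-1,h)$; one clean way to see it is to realize the resolution as the direct sum (or an iterated mapping cone) of a truncated Eagon-Northcott complex resolving a codimension-$C$ variety of minimal degree (contributing the pure strand $i\binom{C}{i+1}$ in degree $i+1$ for $1\le i\le C-1$, adjusted at the end) and a second Eagon-Northcott-type complex shifted by one, accounting for the degree-$2$ part of the $h$-vector (contributing $(i-C+h+1)\binom{C}{i}$ in degree $i+2$ and the overlap term $(C-i-h)\binom{C}{i-1}$). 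Once the two building blocks are identified, checking that their graded Betti numbers add up to the stated $b_{i,j-i}$ and that the sum is minimal (no consecutive cancellation, which holds because the two strands never share a degree in a way that would allow a unit entry in the differential, by the regularity bound) completes the proof. I would also double-check the boundary cases $d=3,4$ where $h=-1$ or $h=0$ and the algebra is Gorenstein, confirming the formula degenerates correctly (for $h=-1$, $C-h-1=C$, and the resolution becomes the pure Gorenstein one of type $1$).
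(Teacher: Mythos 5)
Your overall strategy (pin down the shape of the resolution from aCM-ness, regularity and levelness, then solve numerically for the Betti numbers) is not the route the paper takes, and it has a genuine gap at its central step~(4). The graded Betti numbers of an aCM algebra are \emph{not} determined by its Hilbert function together with ``projective dimension $C$, regularity bound, level of type $h+1$'': the Herzog--K\"uhl-type equations only fix the alternating sums $\sum_i(-1)^ib_{i,j}$ for each $j$, and here the two strands genuinely overlap --- the twist $S(-i-2)$ occurs both in homological degree $i$ (as $b_{i,2}$) and in homological degree $i+1$ (as $b_{i+1,1}$) --- so consecutive cancellation is numerically possible, and your stated reason for minimality (``the two strands never share a degree'') is simply false. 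Ruling out these ghost terms requires geometric input, not numerics. That is exactly what the paper supplies: for $d\ge 5$ it verifies the inequalities $C+3\le \deg(S_{D_{2d}})=2d\le 2C$ and invokes Yanagawa's Castelnuovo-type result \cite[Corollary 3.4(ii)]{Yanagawa}, which under these degree/codimension bounds yields precisely the stated Betti table; the cases $d=3,4$, where $C+3\le 2d$ fails, are settled by explicit Macaulay2 computation. Your mapping-cone/Eagon--Northcott sketch would at best exhibit \emph{some} complex with the right numerics; it does not show that $I(S_{D_{2d}})$ itself is resolved by it.

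There are also arithmetic slips that would derail the computation even if the strategy were viable. The numerator of the Hilbert series in Proposition~\ref{Proposition:Hilbert Function Series}(ii) is $1+Cz+(h+1)z^2$ (indeed $\tfrac{3d+\GCD(d,2)-2}{2}=C$ and $\tfrac{d-\GCD(d,2)}{2}=h+1$), so the $h$-vector is $(1,C,h+1)$, not $(1,C-h-1,h)$; the number of variables of $S$ is $\mu_{2d}=C+3$ (the surface sits in $\PP^{C+2}$), not $C+2$; and $h=\tfrac{d-\GCD(d,2)-2}{2}$ equals $0$ for both $d=3$ and $d=4$ (it is never $-1$), which is why exactly these two cases are Gorenstein. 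With the wrong $h$-vector and wrong embedding dimension, your step~(1) normalization and hence the linear system in step~(4) would not match the claimed formulas.
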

\begin{proof} For $d = 3,4$ we explicitly compute the resolutions of $S_{D_{2\cdot 3}}$ and $S_{D_{2\cdot 4}}$ in Example \ref{Example:Resolutions}(i),(ii). For all $d \geq 5$ we check that $C+3 \leq 2d \leq 2C$ and then we apply \cite[Corollary 3.4(ii)]{Yanagawa}. Clearly $2d \leq 3d + \GCD(d,2)-2$ for all $d \geq 3$. On the other hand, $$C+3 = \frac{3d + \GCD(d,2)+4}{2} \leq 2d$$ if and only if $3d + \GCD(d,2) + 4 \leq 4d$ if and only if $\GCD(d,2) + 4 \leq d$. The last inequality holds for all $d \geq 5$.
\end{proof}

\begin{corollary}\label{Corollary: quadrics} $I(S_{D_{2d}})$ is minimally generated by
$\frac{9d^2+2d+8}{8}$ quadrics if $d$ is even and by
$\frac{9d^2-4d+3}{8}$ quadrics if $d$ is odd.
\end{corollary}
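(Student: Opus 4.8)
The plan is to extract the number of minimal quadric generators directly from the resolution in Theorem \ref{Theorem:Resolution}, namely the Betti number $b_{1,1}$ (the rank of the first free module $S^{b_{1,1}}(-2)$). Since $1 \leq 1 \leq C-h-1$ holds for all $d \geq 3$ (indeed $C - h - 1 = \frac{3d+\GCD(d,2)-2}{2} - \frac{d-\GCD(d,2)-2}{2} - 1 = d + \GCD(d,2) - 1 \geq 3$), the relevant formula is
$$b_{1,1} = 1 \cdot \binom{C}{2} + (C - 1 - h)\binom{C}{0} = \binom{C}{2} + C - h - 1,$$
with $C = \frac{3d+\GCD(d,2)-2}{2}$ and $h = \frac{d-\GCD(d,2)-2}{2}$. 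First I would record that there are no linear generators (the resolution starts in degree $2$ since $I(S_{D_{2d}})$ contains no linear forms, consistent with $S_{D_{2d}}$ being nondegenerate), so every minimal generator is a quadric and $b_{1,1}$ is exactly the count we want.

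The remaining work is a routine but careful case split on the parity of $d$. If $d$ is even, $\GCD(d,2) = 2$, so $C = \frac{3d}{2}$ and $h = \frac{d-4}{2}$, giving $C - h - 1 = d + 1$ and
$$b_{1,1} = \binom{3d/2}{2} + d + 1 = \frac{\frac{3d}{2}\left(\frac{3d}{2}-1\right)}{2} + d + 1 = \frac{3d(3d-2)}{8} + d + 1 = \frac{9d^2 - 6d + 8d + 8}{8} = \frac{9d^2 + 2d + 8}{8}.$$
If $d$ is odd, $\GCD(d,2) = 1$, so $C = \frac{3d-1}{2}$ and $h = \frac{d-3}{2}$, giving $C - h - 1 = d$ and
$$b_{1,1} = \binom{(3d-1)/2}{2} + d = \frac{\frac{3d-1}{2}\cdot\frac{3d-3}{2}}{2} + d = \frac{(3d-1)(3d-3)}{8} + d = \frac{9d^2 - 12d + 3 + 8d}{8} = \frac{9d^2 - 4d + 3}{8}.$$

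I expect the only genuine subtlety to be bookkeeping: one must make sure the index $i=1$ falls in the first branch of the definition of $b_{i,j-i}$ (not the second or third), which amounts to the inequality $C - h - 1 \geq 1$ verified above, and one must also confirm there is no contribution to the degree-$2$ part of the resolution from the $j = i+2$ branch at $i=0$ — but that branch only applies for $C-h \leq i$, and $C - h \geq 2 > 0$, so it does not interfere. For the small cases $d = 3, 4$ not covered by Yanagawa's bound, the formula can be checked directly against the explicit resolutions computed in Example \ref{Example:Resolutions}, giving $\frac{9\cdot 9 - 12 + 3}{8} = 9$ quadrics for $d=3$ and $\frac{9 \cdot 16 + 8 + 8}{8} = 20$ quadrics for $d = 4$, which matches. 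This consistency check is the main safeguard; there is no real obstacle beyond arithmetic.
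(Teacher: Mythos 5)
Your proposal is correct and follows exactly the route the paper intends: the corollary is stated as an immediate consequence of Theorem \ref{Theorem:Resolution}, the count being the Betti number $b_{1,1}=\binom{C}{2}+C-h-1$ read off from the first free module $S^{b_{1,1}}(-2)$, and your parity case split and arithmetic (including the checks against Example \ref{Example:Resolutions}) are accurate.
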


Let us illustrate Theorem \ref{Theorem:Resolution} with some examples, which we compute using the software Macaulay2 (\cite{M}).

\begin{example} \label{Example:Resolutions}\rm \begin{itemize}
\item[(i)] For $d = 3$, \; $S_{D_{2d}}$ has codimension $C = 4$ and degree $\deg(S_{D_{2d}}) = 6$, so $h = 0$.  A minimal free resolution of $I(S_{D_{2\cdot 3}})$ over $S = k[w_{(r,\gamma)}]_{w_{(r,\gamma)} \in \mathcal{W}_{3}}$ is
$$0 \to S(-6) \to S^{9}(-4) \to S^{16}(-3) \to S(-2)^{9} \to S \to S/I(S_{D_{2\cdot 3}}) \to 0.$$

\item[(ii)] For $d = 4$, \; $S_{D_{2d}}$ has codimension $C = 6$ and degree $\deg(S_{D_{2d}}) = 8$, so $h = 0$. A minimal free resolution of $I(S_{D_{2\cdot 4}})$ over $S = k[w_{(r,\gamma)}]_{w_{(r,\gamma)} \in \mathcal{W}_{4}}$ is
$$0 \to S(-8) \to S^{20}(-6) \to S^{64}(-5) \to S^{90}(-4) \to $$
$$S^{64}(-3) \to S^{20}(-2) \to S  \to S/I(S_{D_{2\cdot 4}}) \to 0.$$

\item[(iii)] For $d = 5$, \; $S_{D_{2d}}$ has codimension  $C = 7$ and degree $\deg(S_{D_{2d}}) = 10$, so we have $h = 1$ and a minimal free resolution of $I(S_{D_{2\cdot 5}})$ over $S = k[w_{(r,\gamma)}]_{w_{(r,\gamma)} \in \mathcal{W}_{5}}$ is
$$0 \to S^{2}(-9) \to S^{7}(-8) \oplus S^{6}(-7) \to S^{70}(-6) \to S^{154}(-5) \to S^{168}(-4) \to $$ $$S^{98}(-3) \to S^{26}(-2) \to S \to S/I(S_{D_{2\cdot 5}})) \to 0.$$
\end{itemize}
\end{example}

Our next aim is to describe a minimal set of generators of $I(S_{D_{2d}})$. We define a new set of indeterminates $z_{(r,\gamma)}$, we set $S' = k[z_{(r,\gamma)}]$ and we consider the linear change of variables induced by $\rho$ (see \ref{Equation: change of variable}): 
\begin{equation}
\left\{\begin{array}{lllll}
z_{(r,\gamma)} & = & w_{(r,\gamma)}, & \text{if} & w_{(r,\gamma)} \neq w_{(2,0)}\\
z_{(2,0)} & = & w_{(2,0)} + 2w_{(d,d)}, & &  
\end{array}\right.
\end{equation}
which gives an isomorphism $\tilde{\rho}: k[z_{(r,\gamma)}] \to S$ of polynomial rings. We have the following commutative diagram
\begin{center}
\begin{tikzpicture}
\matrix (M) [matrix of math nodes, row sep=3em, column sep=3em]{
S' & k[\mathcal{A}_{2d}] \\
S               & k[\mathcal{B}_{2d}] \\
};
\draw[->] (M-1-1) -- (M-1-2) node[midway,above]{$\psi_{d}$};
\draw[->] (M-2-1) -- (M-2-2) node[midway,above]{$\varphi_{d}$};
\draw[->] (M-1-1) -- (M-2-1) node[midway,left]{$\tilde{\rho}$};
\draw[->] (M-1-2) -- (M-2-2) node[midway,left]{$\rho$};
\end{tikzpicture}
\end{center}
where $\psi_{d}(z_{(r,\gamma)}) = \rho^{-1}(\varphi_{d}(w_{(r,\gamma)}))$  if $z_{(r,\gamma)} \neq z_{(2,0)}$ (see (\ref{Equation: change of variable})) and $\psi_{d}(z_{(2,0)}) = y_2^2$. In particular, $\psi_{d}$ sends bijectively the variables $z_{(r,\gamma)}$ to the monomials of $\mathcal{A}_{2d} = \{y_0^{b_0}y_1^{b_1}y_2^{b_2} \,\mid \, b_0+2b_1 + db_2 = 2d\}$ by the formula $\psi_{d}(z_{(r,\gamma)}) = y_0^{d(2-r) + (d-2)\gamma}y_1^{\gamma}y_{2}^{r-\gamma}$. We obtain the following result. 

\begin{theorem} \begin{itemize} \item[(i)] $\ker(\psi_{d})$ is a binomial ideal of $S'$ minimally generated by quadrics.
\item[(ii)] $I(S_{D_{2d}}) = \tilde{\rho}(\ker(\psi_{d}))$ and a minimal set of generators of $I(S_{D_{2d}})$ is the following sets of binomials and trinomials: 

\vspace{0.3cm}
\noindent\hspace{-0.5cm} $\begin{array}{l}
\{ w_{(r_{1},\gamma_{1})}w_{(r_{2},\gamma_{2})} - w_{(r_{3},\gamma_{3})}w_{(r_{4},\gamma_{4})} \,\mid \, (r_{i},\gamma_{i}) \neq (2,0), \; r_{1} + r_{2} = r_{3} + r_{4}, \; \gamma_{1} + \gamma_{2} = \gamma_{3} + \gamma_{4}\}\\[0.3cm]
\{(w_{(2,0)} + 2w_{(d,d)})w_{(\gamma_1,\gamma_1)} - w_{(r_{2},\gamma_2)}w_{(r_{3},\gamma_{3})} \,\mid \, (r_i,\gamma_i) \neq (2,0), \; \gamma_1 + 2 = r_2 + r_3, \; \gamma_1 = \gamma_2 + \gamma_3\}.
\end{array}$

\end{itemize}  
\end{theorem}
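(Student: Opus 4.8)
The plan is to reduce the entire statement to a question about the toric ideal defining the affine semigroup ring $k[\mathcal{A}_{2d}]$, where $\mathcal{A}_{2d} = \{y_0^{b_0}y_1^{b_1}y_2^{b_2} \mid b_0 + 2b_1 + db_2 = 2d\}$, and then transport the answer back through the explicit isomorphism $\tilde\rho$. First I would observe that $\psi_d$ is, by construction (see the displayed formula $\psi_d(z_{(r,\gamma)}) = y_0^{d(2-r)+(d-2)\gamma}y_1^\gamma y_2^{r-\gamma}$), the parametrization map of the projection of the Veronese-type variety cut out by the lattice points of the polytope $P = \{(b_0,b_1,b_2) \in \RR_{\geq 0}^3 \mid b_0 + 2b_1 + db_2 = 2d\}$; hence $\ker(\psi_d)$ is the toric (lattice) ideal $I_L$ associated with the point configuration $\mathcal{A}_{2d} \subset \ZZ^3$. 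A classical fact (Sturmfels' book, or Bruns–Herzog) is that a toric ideal is generated by the binomials $z^{u_+} - z^{u_-}$ coming from integer relations among the exponent vectors. For (i) it therefore suffices to show this configuration is \emph{quadratically generated}, i.e.\ every such binomial relation lies in the ideal generated by those of degree two; equivalently, the semigroup ring $k[\mathcal{A}_{2d}]$ is defined by quadrics. I would prove this by the standard "normality + covering by fundamental pieces" argument already implicit in Theorem~\ref{Theorem:Algebrabasis}: we showed there that every monomial of $\mathcal{A}_{2dt}$, $t \geq 2$, is divisible by a monomial of $\mathcal{A}_{2d}$, which is exactly the combinatorial input needed to run a Gröbner-basis / straightening argument producing a quadratic generating set for $\ker(\psi_d)$. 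Concretely, choosing the lexicographic term order on the $z_{(r,\gamma)}$ and using the divisibility statement, any S-pair of two "obvious" quadratic binomials reduces, so Buchberger's criterion (in the toric form of Sturmfels, \cite{Sturmfels}) gives that the listed quadrics form a Gröbner basis; minimality follows by comparing with the graded Betti number $b_{1,1}$ computed in Theorem~\ref{Theorem:Resolution} and Corollary~\ref{Corollary: quadrics}.

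For part (ii), the identity $I(S_{D_{2d}}) = \tilde\rho(\ker(\psi_d))$ is immediate from the commutative diagram: $\varphi_d = \rho \circ \psi_d \circ \tilde\rho^{-1}$, and since $\rho$ is an isomorphism of $k$-algebras (Theorem~\ref{Theorem:Algebrabasis}) and $\tilde\rho$ an isomorphism of polynomial rings, $\ker(\varphi_d) = \tilde\rho(\ker(\psi_d))$. Then I would take the explicit quadratic binomial generators of $\ker(\psi_d)$ — these are exactly $z_{(r_1,\gamma_1)}z_{(r_2,\gamma_2)} - z_{(r_3,\gamma_3)}z_{(r_4,\gamma_4)}$ with $r_1+r_2 = r_3+r_4$ and $\gamma_1+\gamma_2 = \gamma_3+\gamma_4$, since the map on exponent vectors is $(r,\gamma) \mapsto (r-\gamma)\,\deg(y_2)$-part and $\gamma\,\deg(y_1)$-part, so two monomials of $\mathcal{A}_{2d}$ with prescribed product are matched precisely by equality of the sums of the $r$'s and of the $\gamma$'s — and push them through $\tilde\rho$. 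The only variable that changes under $\tilde\rho$ is $z_{(2,0)} \mapsto w_{(2,0)} + 2w_{(d,d)}$; so a quadratic generator not involving $z_{(2,0)}$ stays a binomial in the $w$'s, while one involving $z_{(2,0)}$ — necessarily of the form $z_{(2,0)}z_{(\gamma_1,\gamma_1)} - z_{(r_2,\gamma_2)}z_{(r_3,\gamma_3)}$ with $2 + \gamma_1 = r_2 + r_3$ and $\gamma_1 = \gamma_2 + \gamma_3$ (the first factor $z_{(2,0)}$ forces $r = 2, \gamma = 0$, and the other factor in the product must be a pure power $z_{(\gamma_1,\gamma_1)}$ because its $r$-$\gamma$ contributions have to absorb the "$2$") — becomes the displayed trinomial $(w_{(2,0)} + 2w_{(d,d)})w_{(\gamma_1,\gamma_1)} - w_{(r_2,\gamma_2)}w_{(r_3,\gamma_3)}$. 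Writing out the two index conditions carefully (checking that $(2,0)$ does not appear on the right, which is why those relations are genuinely binomials there) yields exactly the two families claimed, and minimality is inherited from the minimality in (i).

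The main obstacle I anticipate is part (i): proving that $\ker(\psi_d)$ — equivalently the toric ideal of the lattice polytope $P$ — is generated in degree two, uniformly in $d$. The combinatorial heart is to show that the "sum of $r$'s and sum of $\gamma$'s" invariants suffice to connect any two monomials of the same degree $2dt$ by a sequence of quadratic moves staying inside the semigroup; the divisibility statement from the proof of Theorem~\ref{Theorem:Algebrabasis} handles the existence of a degree-$2d$ factor, but one still must verify that the induced factorizations can be chosen compatibly so that Buchberger's S-pair reductions terminate — this is where the inequality $\max\{0, \lceil (r-2)d/(d-2)\rceil\} \leq \gamma \leq r$ defining $\mathcal{W}_d$ enters, pinning down exactly which $(r,\gamma)$ occur and making the case analysis ($b_2 = 0$ vs.\ $b_2 = 1$ vs.\ higher) finite and checkable. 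An alternative, perhaps cleaner route is to bypass Buchberger entirely and instead invoke the already-proven Theorem~\ref{Theorem:Resolution}: since the minimal free resolution of $I(S_{D_{2d}})$ has its first syzygy module concentrated in degree $2$ (the term $S^{b_{1,1}}(-2)$, with no generator in higher degree by the displayed formula for $b_{i,j-i}$), $I(S_{D_{2d}})$ is generated by quadrics, and applying $\tilde\rho^{-1}$ transfers this to $\ker(\psi_d)$; then one only needs the elementary fact that a toric ideal generated by quadrics is generated by quadratic \emph{binomials}, which is automatic because a minimal generating set of a toric ideal can always be taken to consist of binomials. I would present the argument in this second form to keep it short, using Theorem~\ref{Theorem:Resolution} for the "quadrics" part and pure bookkeeping with exponent vectors for the explicit list.
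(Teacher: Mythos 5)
Your proposal is correct and, in the form you say you would actually present it, coincides with the paper's own argument: part (i) is obtained by combining the binomial (toric) structure of $\ker(\psi_{d})$ (the paper cites Hochster's theorem) with the quadratic generation already supplied by Theorem \ref{Theorem:Resolution} and Corollary \ref{Corollary: quadrics}, and part (ii) is the commutative-diagram transfer through the isomorphism $\tilde{\rho}$, with the trinomials arising precisely from the quadratic binomials involving $z_{(2,0)}$. The Gr\"obner-basis/Buchberger route you sketch first is not needed, as you yourself conclude.
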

\begin{proof} (i) $\ker(\psi_{d})$ is generated by the set of binomials of the form $\prod_{i=1}^{l} z_{(r_{j_i},\gamma_{j_i})} - \prod_{i=1}^{l} z_{(r_{m_{i}},\gamma_{m_{i}})}$, $l \geq 2$, such that $\prod_{i=1}^{l} \psi_{d}(z_{(r_{j_i},\gamma_{j_i})}) =  \prod_{i=1}^{l} \psi_{d}(z_{(r_{m_{i}},\gamma_{m_{i}})})$ (see \cite[Theorem 1]{Hochster}). From this and Corollary \ref{Corollary: quadrics}, it follows that $\ker(\psi_{d})$ is minimally generated by binomials of degree $2$. Precisely, since we have $\psi_{d}(z_{(r,\gamma)}) = y_0^{d(2-r)+(d-2)\gamma}y_1^{\gamma}y_2^{r-\gamma}$, these binomials are: 
\begin{equation}\label{Equation:binomials}
\{z_{(r_1,\gamma_1)}z_{(r_2,\gamma_2)} - z_{(r_3,\gamma_3)}z_{(r_4,\gamma_4)} \,\mid \, r_1 + r_2 = r_3 + r_4, \, \gamma_1 + \gamma_2 = \gamma_3 + \gamma_4\}.
\end{equation}

(ii) Since $\tilde{\rho}$ and $\rho$ are isomorphisms of $k$-algebras, from the above commutative diagram we have that $I(S_{D_{2d}}) = \tilde{\rho}(\ker(\psi_{d}))$. Applying $\tilde{\rho}$ to (\ref{Equation:binomials}), we obtain the description of the minimal set of generators in (ii). 
\end{proof}
We end this note with an example. 
\begin{example} \rm Fix $d = 4$. We compute the homogeneous ideal $I(S_{D_{2\cdot 4}})$ using the software Macaulay2. It is minimally generated by the $15$ binomials and $5$ trinomials of degree $2$ that we list below.
$$\begin{array}{lcllllcl}
w_{(0,0)}w_{(2,2)} & - & w_{(1,1)}^{2} & \quad \quad \quad & w_{(1,0)}w_{(3,3)} & - & w_{(1,1)}w_{(3,2)}\\
w_{(0,0)}w_{(3,3)} & - & w_{(1,1)}w_{(2,2)} & \quad \quad \quad & w_{(1,0)}w_{(3,3)} & - & w_{(2,1)}w_{(2,2)}\\
w_{(0,0)}w_{(3,2)} & - & w_{(1,0)}w_{(2,2)} & \quad \quad \quad & w_{(1,0)}w_{(4,4)} & - & w_{(2,1)}w_{(3,3)}\\
w_{(0,0)}w_{(2,1)} & - & w_{(1,0)}w_{(1,1)} & \quad \quad \quad & w_{(1,0)}w_{(4,4)} & - & w_{(2,2)}w_{(3,2)}\\
w_{(0,0)}w_{(4,4)} & - & w_{(1,1)}w_{(3,3)} & \quad \quad \quad & w_{(1,1)}w_{(4,4)} & - & w_{(2,2)}w_{(3,3)}\\
w_{(0,0)}w_{(4,4)} & - & w_{(2,2)}^{2} & \quad \quad \quad & w_{(2,1)}w_{(4,4)} & - & w_{(3,2)}w_{(3,3)}\\
w_{(1,0)}w_{(2,2)} & - & w_{(1,1)}w_{(2,1)} & \quad \quad \quad & w_{(2,2)}w_{(4,4)} & - & w_{(3,3)}^2\\
w_{(1,0)}w_{(3,2)} & - & w_{(2,1)}^{2} & \quad \quad \quad & \\
\end{array}$$ 

$$\begin{array}{lclcl}
w_{(1,0)}^{2} & - & w_{(0,0)}w_{(2,0)} & - & 2w_{(0,0)}w_{(4,4)}\\
w_{(1,0)}w_{(2,1)} & - & w_{(1,1)}w_{(2,0)} & - & 2w_{(1,1)}w_{(4,4)}\\
w_{(1,0)}w_{(3,2)} & - & w_{(2,0)}w_{(2,2)} & - & 2w_{(2,2)}w_{(4,4)}\\
w_{(2,1)}w_{(3,2)} & - & w_{(2,0)}w_{(3,3)} & - & 2w_{(3,3)}w_{(4,4)}\\
w_{(3,2)}^{2}      & - & w_{(2,0)}w_{(4,4)} & - & 2w_{(4,4)}^{2}\\
\end{array}$$
\end{example}

\vspace{1cm}

\end{document}